
\documentclass[letterpaper, 10 pt, conference]{ieeeconf}  

\IEEEoverridecommandlockouts                              
\overrideIEEEmargins

\pdfminorversion=4


\usepackage{graphicx}
\usepackage{amsmath,amssymb}
\usepackage{mathtools}
\usepackage{subcaption}
\usepackage{color}
\usepackage{cite}
\usepackage[bb=dsserif]{mathalpha} 

\newcommand{\norm}[1]{\lVert #1 \rVert}                                 
\newcommand{\inX}[2]{\in\mathbb{#1}^{#2}}                               
\newcommand{\IqZ}{\begin{bmatrix}I_q\\Z\end{bmatrix}}                   
\newcommand{\sqmat}[4]{\begin{bmatrix}#1&#2\\#3&#4\end{bmatrix}}        
\newcommand{\matvec}[2]{\begin{bmatrix}#1\\#2\end{bmatrix}}             
\newcommand{\matvecvec}[3]{\begin{bmatrix}#1\\#2\\#3\end{bmatrix}}      
\newcommand{\Zqr}{\mathcal{Z}^q_r}                                      
\newcommand{\im}{\mathrm{im}\,}                                           
\newcommand{\Zab}[2]{\mathcal{Z}^{#1}_{#2}}                             
\newcommand{\IAB}{\matvecvec{I_n}{A^\top }{B^\top }}                    
\newcommand{\hDel}{\hat{\Delta}}                                        
\newcommand{\hPhi}{\hat{\Phi}}                                          %
\newcommand{\st}{\ \mathrm{s.t.}\ }                                     %

\newtheorem{theorem}{Theorem}
\newtheorem{lemma}[theorem]{Lemma}
\newtheorem{proposition}[theorem]{Proposition}

\newtheorem{assumption}[theorem]{Assumption}
\newtheorem{definition}[theorem]{Definition}
\newtheorem{remark}[theorem]{Remark}

\title{\LARGE \bf
Data Informativity for Quadratic Stabilization under Data Perturbation
}


\author{Taira Kaminaga and Hampei Sasahara
\thanks{This work was supported by JSPS KAKENHI Grant Number 24K17296.}
\thanks{T. Kaminaga and H. Sasahara are with Department of Systems and Control Engineering, Graduate School of Engineering,
Institute of Science Tokyo, Tokyo, Japan {\tt\small kaminaga@cyb.sc.e.titech.ac.jp, sasahara@sc.e.titech.ac.jp}}
}

\begin{document}

\maketitle
\thispagestyle{empty}
\pagestyle{empty}


\begin{abstract}
   Assessing data informativity, determining whether the measured data contains sufficient information for a specific control objective, is a fundamental challenge in data-driven control. 
   In noisy scenarios, existing studies deal with system noise and measurement noise separately, using quadratic matrix inequalities. 
   Moreover, the analysis of measurement noise requires restrictive assumptions on noise properties. 
   To provide a unified framework without any restrictions, this study introduces data perturbation, a novel notion that encompasses both existing noise models. 
   It is observed that the admissible system set with data perturbation does not meet preconditions necessary for applying the key lemma in the matrix S-procedure. 
   Our analysis overcomes this limitation by developing an extended version of this lemma, making it applicable to data perturbation. 
   Our results unify the existing analyses while eliminating the need for restrictive assumptions made in the measurement noise scenario.
\end{abstract}

\section{INTRODUCTION}
Data-driven control is a controller synthesis approach that relies on directly using data from the system to be controlled, 
rather than relying on detailed mathematical models obtained through system identification~\cite{DDC:Hou2013,DDC:Markovsky2023_Cont_sys_mag}. 
A fundamental challenge in data-driven control is assessing \emph{data informativity}~\cite{DDC:Waarde2020_TAC_Dinfo}, 
which refers to the extent to which the measured data contains sufficient information for the system analysis and controller synthesis of interest. 
Computationally tractable conditions, typically formulated by linear matrix inequalities (LMIs), 
have been successfully derived for key problems, such as controllability verification, stabilization, and optimal control, in the noise-free case~\cite{DDC:Waarde2023_Cont_sys_mag_informativity}. 
The data informativity framework has also been extended to scenarios where the collected data is corrupted by noise. 
The works~\cite{DDCQMI:Waarde2022_TAC_origin,DDCQMI:Waarde2023_siam_qmi,DDCQMI:BISOFFI2022_Petersen,DDCQMI:BISOFFI2021}
study data informativity under system noise, where the system is governed not only by control input but also by exogeneous disturbances. 
Additionally, the studies~\cite{DDCQMI:BISOFFI2024_CSL,DDC:Miller2024_TAC}
address the errors in variables (EIV) model, which accounts for measurement noise.

Technically, most of those existing studies for noisy data rely on the \emph{matrix S-procedure} using a noise model characterized by \emph{quadratic matrix inequalities (QMIs).} 
QMI, an inequality involving a matrix-valued quadratic function, can capture various noise characteristics, such as its energy bound. 
The core idea of their analysis is as follows: First, the admissible system set, whose elements are consistent with the collected data and the noise model, is described as a set constrained by a QMI. 
Next, the system set whose elements satisfy the property of interest is also represented as a set characterized by another QMI. 
Finally, leveraging the matrix S-lemma or its variants~\cite{DDCQMI:Waarde2022_TAC_origin,DDCQMI:Waarde2023_siam_qmi,DDCQMI:BISOFFI2022_Petersen}, 
a nonconservative LMI condition is derived to characterize their inclusion relationship, which is equivalent to the data informativity.

In this paper, we propose a general framework that encompasses system noise and EIV settings by introducing \emph{data perturbation} as a novel noise model.
Data perturbation refers to noises added to whole input and state data and constrained within a linear subspace. 
We focus on data informativity for quadratic stabilization of an unknown discrete-time linear time-invariant (LTI) system with data perturbation constrained by a QMI. 
We basically follow the existing analysis 
by characterizing the admissible system set by a QMI 
and investigating the inclusion relationship with the stabilizable system set based on the matrix S-lemma. 
However, we observe that the admissible system set with data perturbation does not satisfy preconditions, 
such as convexity, required for applying the existing matrix S-lemma, 
even if the noise set satisfies them. 
This issue does not occur in the system noise setting~\cite{DDCQMI:Waarde2022_TAC_origin,DDCQMI:Waarde2023_siam_qmi} 
and is avoided in the existing analysis of the EIV setting~\cite{DDCQMI:BISOFFI2024_CSL} 
where the signal-to-noise ratio (SNR) is assumed to be sufficiently large. 
We overcome this problem by deriving an extended matrix S-lemma that eliminates the need for the geometric properties of the admissible system set.
The extended matrix S-lemma provides an LMI condition equivalent to the data informativity for quadratic stabilization of all systems consistent with data.

Our contributions are twofold. 
First, we provide data informativity analysis for quadratic stabilization under data perturbation, 
offering a unified version of the existing ones under system noise~\cite{DDCQMI:Waarde2022_TAC_origin,DDCQMI:Waarde2023_siam_qmi} 
and EIV~\cite{DDCQMI:BISOFFI2024_CSL}. 
Second, our analysis eliminates the restrictive assumptions in~\cite{DDCQMI:BISOFFI2024_CSL}, 
which require the data magnitude to be sufficiently large relative to noise and the noise property to be energy bound, a special case of QMI constraints. 
In contrast, our approach accommodates low SNR scenarios and can handle any QMI constraint on the noise.

This paper is organized as follows. 
Sec.~\ref{sec:pre} reviews basic properties of QMI and previous studies on data informativity analysis under system noise and EIV.
Sec.~\ref{sec:main} formulates the problem by introducing data perturbation and deriving the admissible system set.
Subsequently, we derive an extended matrix S-lemma for providing data informativity analysis for quadratic stabilization under data perturbation.
Finally, Sec.~\ref{sec:conclusion} draws conclusions of this paper.

\subsection*{Notation}
We denote 
the set of $n$-dimensional symmetric matrices by $\mathcal{S}^n$,
the $n$-dimensional identity matrix by $I_n$,
the transpose of a matrix $M$ by $M^\top$, 
the pseudo-inverse matrix of a matrix $M$ by $M^\dagger$,
the positive and negative (semi) definiteness of a symmetric matrix $M$ by $M\succ(\succeq)0$ and $M\prec(\preceq)0$ respectively,
and the generalized Schur complement of $D$ in {\small$M=\sqmat{A}{B}{C}{D}$} by $M|D\coloneqq A-BD^\dagger C$.


\section{PRELIMINARIES}\label{sec:pre}
\subsection{Properties of Quadratic Matrix Inequality}\label{subsec:qmi}
We summarize important properties related to quadratic matrix inequalities (QMIs).
A more detailed discussion can be found in~\cite{DDCQMI:Waarde2023_siam_qmi}.
A QMI of the matrix $Z\inX{R}{r\times q}$ is defined as
\begin{equation}\label{qmi:dfn}
   \IqZ^\top  N\IqZ\succeq 0,
\end{equation}
with $N\inX{S}{q+r}$.
A set of the matrices $Z$ satisfying \eqref{qmi:dfn} is defined as $\Zqr(N)\coloneqq\{Z\inX{R}{r\times q}|\eqref{qmi:dfn}\}$.
We denote the submatrices of $N$ as {\small$N=\sqmat{N_{11}}{N_{12}}{N_{21} }{N_{22}}$}
such that $N_{11}\inX{S}{q}$ and $N_{22}\inX{S}{r}$.

First, we introduce the term \emph{matrix ellipsoid}, which characterizes geometric properties of QMIs.
We say a set $\Zqr(N)$ to be a matrix ellipsoid 
when the matrix $N\inX{S}{q+r}$ in the QMI \eqref{qmi:dfn} satisfies the conditions
\begin{equation}\label{qmi:N_condition}
   N_{22}\preceq 0,\quad \ker N_{22}\subseteq\ker N_{12},\quad N|N_{22}\succeq 0.
\end{equation}
When $\ker N_{22}\subseteq\ker N_{12}$ holds, the equation
\begin{equation}\label{N_bunkai}
   N=\sqmat{I_q}{N_{12}N_{22}^\dagger}{0}{I_r}\sqmat{N|N_{22}}{0}{0}{N_{22}}\sqmat{I_q}{N_{12}N_{22}^\dagger}{0}{I_r}^\top
\end{equation}
also holds~\cite[Fact 6.5.4]{bernstein2009matrix}, and then, we have that
\begin{equation}\label{qmi:tenkai}
   \IqZ^\top\! N\!\IqZ\!=\!N|N_{22}\!+\!(Z\!+\!N_{22}^\dagger N_{21})^\top\! N_{22}(Z\!+\!N_{22}^\dagger N_{21}).
\end{equation}
Thus, when \eqref{qmi:N_condition} holds, the QMI \eqref{qmi:dfn} can be written as
\begin{equation}\label{qmi:Z_ellip}
   Q^\top Q-(Z-Z_c)^\top R^\top R(Z-Z_c)\succeq 0,
\end{equation}
where the matrices $Q\inX{R}{q\times q}$ and $R\inX{R}{r\times r}$ are the Cholesky decomposition of the matrices $N|N_{22}$ and $-N_{22}$
such that $Q^\top Q=N|N_{22}$ and $R^\top R=-N_{22}$ respectively,
and $Z_c=-N_{22}^\dagger N_{21}\inX{R}{r\times q}$.
The description \eqref{qmi:Z_ellip} is an extension of the standard description of ellipsoids~\cite[Eq. (3.9)]{boyd1994linear},
and implies that the set $\Zqr(N)$ is convex and symmetric with respect to the point $Z_c$.
We denote a set of the symmetric matrices satisfying \eqref{qmi:N_condition} as
\begin{equation*}
   \Pi_{q, r}\coloneqq \left\{\sqmat{N_{11}}{N_{12}}{N_{21} }{N_{22}}\inX{S}{q+r}\middle|\eqref{qmi:N_condition}\right\}.
\end{equation*}

In this paper, we treat the conditions introduced in~\cite[Sec.~3]{DDCQMI:Waarde2023_siam_qmi} as the definition of the matrix ellipsoid for consistency with the required preconditions in the existing matrix S-lemma (Proposition~\ref{prop:Slem_beta}), although the term has originally been introduced in~\cite[Sec.~2.2]{DDCQMI:BISOFFI2021} where the conditions are given by $N_{22}\prec 0$ and $N|N_{22}\succ 0$.
The original condition is stricter than \eqref{qmi:N_condition} and requires the set $\Zqr(N)$ to be bounded with a nonempty interior.
Note that the matrix ellipsoid defined with \eqref{qmi:N_condition} does not necessarily ensure the boundedness or the existence of nonempty interiors.

Next, we introduce the (strict) matrix S-lemma, proposed in~\cite{DDCQMI:Waarde2022_TAC_origin,DDCQMI:Waarde2023_siam_qmi}.
The standard S-lemma provides an LMI condition under which a quadratic inequality is a consequence of another one~\cite{MATH:Polik2007}.
The matrix S-lemma has originally been derived in~\cite{DDCQMI:Waarde2022_TAC_origin},
and the assumption of the matrix S-lemma is relaxed in~\cite{DDCQMI:Waarde2023_siam_qmi}.
\begin{proposition}[{Matrix S-lemma \cite[Corollary 4.13]{DDCQMI:Waarde2023_siam_qmi}}]\label{prop:Slem_beta}
   Let $M, N\inX{S}{q+r}$.
   Assume that $N\in\Pi_{q, r}$ and $M_{22}\preceq 0$.
   Then,
   \begin{equation}\label{slem_obj}
     \IqZ^\top N\IqZ\succeq 0\Rightarrow\IqZ^\top M\IqZ\succ 0
   \end{equation}
   if and only if there exist scalars $\alpha\geq 0$ and $\beta>0$ such that
   \begin{equation}\label{slem_lmi}
     M-\alpha N\succeq \sqmat{\beta I_q}{0}{0}{0}.
   \end{equation}
\end{proposition}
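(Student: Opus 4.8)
The plan is to treat the two implications separately; the nontrivial one is necessity. \emph{Sufficiency} (\eqref{slem_lmi} $\Rightarrow$ \eqref{slem_obj}) is a direct computation: given $\alpha\geq 0$ and $\beta>0$ satisfying \eqref{slem_lmi}, for any $Z\in\Zqr(N)$ I would split
\begin{equation*}
   \IqZ^\top M\IqZ=\IqZ^\top(M-\alpha N)\IqZ+\alpha\,\IqZ^\top N\IqZ.
\end{equation*}
The second term is positive semidefinite because $\alpha\geq 0$ and $Z\in\Zqr(N)$, while \eqref{slem_lmi} bounds the first from below by $\IqZ^\top\sqmat{\beta I_q}{0}{0}{0}\IqZ=\beta I_q\succ 0$, so the sum is positive definite and \eqref{slem_obj} follows.

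For \emph{necessity} I would certify the implication \eqref{slem_obj} by a lossless S-procedure. Testing the matrix inequalities against vectors $\xi$, the hypothesis is equivalent to the statement that $w^\top N w\geq 0$ for all $w$ in the subspace $\im\IqZ$ forces $w^\top M w>0$ on that same subspace, i.e., nonnegativity of the form associated with $M$ on the cone cut out by the form associated with $N$. I would encode this through the joint (numerical) range of the two matrix-valued quadratic forms and separate it from the appropriate half-space, extracting the multiplier $\alpha\geq 0$ together with the margin $\beta>0$ so as to obtain \eqref{slem_lmi}. Since \eqref{qmi:N_condition} does not force $\Zqr(N)$ to be bounded, the assumption $M_{22}\preceq 0$ enters here: it aligns the curvatures of the two forms along the recession directions of $\Zqr(N)$ (on which $N$'s form, with $N_{22}\preceq 0$, is also bounded above), keeping the dualization from breaking down at infinity. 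The decomposition \eqref{N_bunkai}--\eqref{qmi:Z_ellip}, available precisely because $N\in\Pi_{q,r}$, is the key tool for this step.

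\textbf{The main obstacle} is the convexity and closedness of that joint range. The map $Z\mapsto\IqZ^\top N\IqZ$ is nonlinear, so in general the joint range of the two forms need not be convex --- a matrix analogue of the failure of Dines' theorem --- in which case a single multiplier would be conservative and no separating functional would be available. The structural conditions $\ker N_{22}\subseteq\ker N_{12}$ and $N|N_{22}\succeq 0$ that define $\Pi_{q,r}$ are exactly what restore this convexity (through \eqref{N_bunkai}) and the closedness of the relevant dual cone. Once that is secured, the separation argument closes and yields the certificate \eqref{slem_lmi}; this is also why the precondition $N\in\Pi_{q,r}$ cannot be dropped, and, as the paper emphasizes, why the admissible set arising under data perturbation --- which violates these conditions --- calls for an extension of this lemma.
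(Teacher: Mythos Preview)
The paper does not prove this proposition; it is quoted as a preliminary result from~\cite[Corollary~4.13]{DDCQMI:Waarde2023_siam_qmi} and used as a black box. There is therefore no in-paper argument to compare against.

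On the merits of your proposal: the sufficiency direction is correct and complete. For necessity, however, you have a plan rather than a proof. You correctly locate the crux --- a lossless dualization --- and correctly name the obstacle --- convexity and closedness of the relevant joint range --- but you then simply assert that the conditions defining $\Pi_{q,r}$ ``restore this convexity'' via~\eqref{N_bunkai} and that ``once that is secured, the separation argument closes.'' Neither of these two steps is actually carried out. For matrix-valued quadratic forms $Z\mapsto\IqZ^\top N\IqZ$ the joint range lives in $\mathcal{S}^q\times\mathcal{S}^q$, not in $\mathbb{R}^2$, so the classical Dines/Yakubovich convexity results do not apply directly; establishing the matrix analogue (and the subsequent separation yielding a \emph{single} scalar multiplier $\alpha$ together with the uniform margin $\beta$) is precisely the nontrivial content of the lemma, and you have left it as an unproved claim. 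In short, the hard half of the argument --- actually producing $\alpha\geq 0$ and $\beta>0$ from~\eqref{slem_obj} --- is still missing.
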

In the application to control for discrete-time LTI systems, the matrix $Z$ corresponds to system matrices.
Condition \eqref{slem_obj} means that
all system matrices in $\Zqr(N)$ satisfy QMI with the matrix $M$.
Proposition~\ref{prop:Slem_beta} transforms \eqref{slem_obj} into the tractable LMI condition \eqref{slem_lmi}.

\subsection{Data Informativity with System Noise}\label{subsec:system_noise}
We review the data informativity analysis for stabilization under system noise in~\cite{DDCQMI:Waarde2022_TAC_origin,DDCQMI:Waarde2023_siam_qmi}.
Consider the discrete-time LTI system $x(t+1)=A_sx(t)+B_su(t)+w(t)$,
where $x(t) \in \mathbb{R}^n$ is the system state, 
$u(t) \in \mathbb{R}^m$ is the control input, and $w(t) \in \mathbb{R}^n$ is the system noise.
We assume the true system $(A_s, B_s)$ to be unknown.
By open-loop experiment for $t=0,\ldots,T$, we obtain time-series data $X_+=[x(1)\ \cdots\ x(T)]\inX{R}{n\times T},$ $X_-=[x(0)\ \cdots\ x(T-1)]\inX{R}{n\times T},$ and $U_-=[u(0)\ \cdots\ u(T-1)]\inX{R}{m\times T}.$
\if0
\begin{align*}
   &U_-=[u(0)\ u(1)\ \cdots\ u(T-1)]\inX{R}{m\times T},\\
   &X_-=[x(0)\ x(1)\ \cdots\ x(T-1)]\inX{R}{n\times T},\\
   &X_+=[x(1)\ x(2)\ \cdots\ x(T)]\inX{R}{n\times T}.
\end{align*}
\fi
We denote the unknown system noise sequence in the experiment as $W_-=[w(0)\ \cdots\ w(T-1)]\inX{R}{n\times T}$.
Then, the data and the noise satisfy
\begin{equation}\label{dist:stateeq_mat}
   X_+=A_sX_-+B_sU_-+W_-.
\end{equation}
Additionally, we assume the system noise satisfies an QMI constraint
\begin{equation}\label{dist:qmi_noise}
   \matvec{I_n}{W_-^\top}^\top \Phi^W\matvec{I_n}{W_-^\top}\geq 0,
\end{equation}
i.e. $W_-^\top\in\Zab{n}{T}(\Phi^W)$, 
where $\Phi^W$ is a known matrix satisfying $\Phi^W\in\Pi_{n, T}$.
QMI \eqref{dist:qmi_noise} can describe various constraints on system noise $W_-$, such as energy bound and sample covariance matrix bound~\cite{DDCQMI:Waarde2023_siam_qmi}.
We consider stabilization of the unknown system $(A_s, B_s)$ with a state-feedback controller $u=Kx$ 
by utilizing the prior information on the system noise $\Phi^W$ and the data $(X_+, X_-, U_-)$.

We say that a pair of system matrices $(A,B)$ is consistent with the data $(X_+,X_-,U_-)$ when
there exists a noise sequence $W_-$ such that 
\begin{equation}\label{dist:stateeq_mat2}
   X_+=AX_-+BU_-+W_-.
\end{equation} and \eqref{dist:qmi_noise}.
We define a set of the systems consistent with the data as
\begin{equation}\label{dist:Sigma}
   \Sigma\coloneqq\left\{(A, B)\middle| \exists W_-\inX{R}{n\times T} \st \eqref{dist:qmi_noise}, \eqref{dist:stateeq_mat2}\right\}.
\end{equation}
Note that $(A_s,B_s)\in\Sigma$.
Since the system noise $W_-$ is unknown, the true system $(A_s,B_s)$ cannot be distinguished from other systems in $\Sigma$.
Therefore, we seek a controller $K\inX{R}{m\times n}$ that stabilizes any system in $\Sigma$.
In particular, we consider quadratic stabilization of $\Sigma$, i.e.
every closed-loop system matrix $A+BK$ with $(A,B)\in\Sigma$ has a common quadratic Lyapunov function.
Data informativity for quadratic stabilization is defined as follows.
\begin{definition}\label{dfn:dinfo_qstab}
   The data $(X_+, X_-, U_-)$ is informative for quadratic stabilization 
   if there exist a feedback gain $K\inX{R}{m\times n}$ and a positive definite matrix $P\succ 0$ such that
   \begin{equation}\label{dist:qstab}
      P-(A+BK)P(A+BK)^\top\succ 0
   \end{equation}
   for all $(A,B)\in\Sigma$.
\end{definition}

The analysis begins with characterization of the admissible system set $\Sigma$ by a QMI.
\begin{proposition}[{\cite[Lemma 4]{DDCQMI:Waarde2022_TAC_origin}}]\label{prop:dist_qmi_noise2sys}
   Let $\Phi^W\inX{S}{n+T}$ and $\Sigma$ be defined as \eqref{dist:Sigma}.
   Then, $(A,\ B)\in\Sigma$ is satisfied if and only if
   \begin{equation}\label{dist:qmi_sys}
     \IAB^\top N^W\IAB\succeq 0,
   \end{equation}
   i.e., $[A\ B]^\top\in\Zab{n}{n+m}(N^W)$,
   where $N^W\inX{S}{2n+m}$ is defined as
   \begin{equation}\label{dist:dfn_N}
     N^W\coloneqq\begin{bmatrix}
       I_n&X_+\\0&-X_-\\0&-U_-
     \end{bmatrix}\Phi^W\begin{bmatrix}
       I_n&X_+\\0&-X_-\\0&-U_-
     \end{bmatrix}^\top.
   \end{equation}
\end{proposition}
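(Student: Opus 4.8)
The plan is to exploit the observation that the state equation \eqref{dist:stateeq_mat2} pins down the noise sequence uniquely, so that the existential quantifier in the definition \eqref{dist:Sigma} of $\Sigma$ collapses to a single substitution. For any candidate pair $(A,B)$, equation \eqref{dist:stateeq_mat2} forces $W_-=X_+-AX_--BU_-$ with no remaining freedom. Hence $(A,B)\in\Sigma$ holds if and only if this particular $W_-$ satisfies the noise QMI \eqref{dist:qmi_noise}.

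Next I would rewrite the stacked matrix appearing in \eqref{dist:qmi_noise} in terms of $\IAB$. Transposing the induced noise gives $W_-^\top=X_+^\top-X_-^\top A^\top-U_-^\top B^\top$, and a blockwise check establishes the factorization
\begin{equation*}
  \matvec{I_n}{W_-^\top}=\begin{bmatrix}I_n&X_+\\0&-X_-\\0&-U_-\end{bmatrix}^\top\IAB.
\end{equation*}
Substituting this into the left-hand side of \eqref{dist:qmi_noise}, the outer transpose and the inner congruence combine into exactly the definition \eqref{dist:dfn_N} of $N^W$, yielding
\begin{equation*}
  \matvec{I_n}{W_-^\top}^\top\Phi^W\matvec{I_n}{W_-^\top}=\IAB^\top N^W\IAB.
\end{equation*}
Thus the noise QMI evaluated at the induced $W_-$ is literally identical to the system QMI \eqref{dist:qmi_sys}.

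The two implications then follow immediately. For necessity, if $(A,B)\in\Sigma$ the witnessing $W_-$ must equal $X_+-AX_--BU_-$ and satisfy \eqref{dist:qmi_noise}, hence \eqref{dist:qmi_sys} holds by the identity above. For sufficiency, given \eqref{dist:qmi_sys}, defining $W_-\coloneqq X_+-AX_--BU_-$ satisfies \eqref{dist:stateeq_mat2} by construction and \eqref{dist:qmi_noise} by the reverse substitution, so such a $W_-$ exists and $(A,B)\in\Sigma$.

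I do not expect any serious obstacle here, since the argument is a direct congruence computation rather than a geometric or optimization-theoretic one. The only point requiring care is the matrix bookkeeping: verifying that the transpose identity holds block by block and that the congruence by the data-dependent matrix reproduces \eqref{dist:dfn_N} verbatim. The conceptual crux is simply recognizing that the existential quantifier over $W_-$ is vacuous because the state equation determines it uniquely, which is what reduces the set-membership condition to a single QMI.
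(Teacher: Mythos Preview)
Your proposal is correct and follows essentially the same approach as the paper: the paper's discussion immediately after the proposition notes that \eqref{dist:stateeq_mat2} uniquely determines $W_-=X_+-AX_--BU_-$ and that substituting this into \eqref{dist:qmi_noise} yields \eqref{dist:qmi_sys}. Your write-up simply makes this substitution explicit via the blockwise factorization and spells out both implications, which is exactly the intended argument.
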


Proposition \ref{prop:dist_qmi_noise2sys} is obtained by a simple substitution.
The equation \eqref{dist:stateeq_mat2} can be written as 
\begin{equation}\label{dist:noise_description}
   W_-=X_+-AX_--BU_-.
\end{equation}
Substituting \eqref{dist:noise_description} into the QMI on noise \eqref{dist:qmi_noise} derives the QMI on system matrices \eqref{dist:qmi_sys}.

We introduce a proposition that states that when the set of the system noise $\Zab{n}{T}(\Phi^W)$ is a matrix ellipsoid the admissible system set is also a matrix ellipsoid.
\begin{proposition}[{\cite[Section 5.1]{DDCQMI:Waarde2023_siam_qmi}}]\label{prop:dist_ellip}
   A matrix $N^W$ is defined as \eqref{dist:dfn_N}.
   Then, we have that $N^W\in\Pi_{n,n+m}$ if $\Phi^W\in\Pi_{n, T}$.
   In other words, $\Zab{n}{n+m}(N^W)$ is a matrix ellipsoid if $\Zab{n}{T}(\Phi^W)$ is a matrix ellipsoid.
\end{proposition}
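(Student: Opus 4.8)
The plan is to verify the three defining conditions \eqref{qmi:N_condition} of $\Pi_{n,n+m}$ directly for $N^W$, partitioned with $q=n$ and $r=n+m$ (so $N^W_{11}\in\mathcal{S}^n$ and $N^W_{22}\in\mathcal{S}^{n+m}$). I denote the data matrix in \eqref{dist:dfn_N} by $M$ and split its rows as $M=\matvec{M_1}{M_2}$, where $M_1\coloneqq[I_n\ X_+]$ are the first $n$ rows and $M_2\coloneqq[0\ {-}D]$ the remaining $n+m$ rows, with $D\coloneqq\matvec{X_-}{U_-}$; then $N^W_{ij}=M_i\Phi^W M_j^\top$. Writing $\Phi^W=\sqmat{\Phi^W_{11}}{\Phi^W_{12}}{\Phi^W_{21}}{\Phi^W_{22}}$ and using that the leading block column of $M_2$ vanishes, a one-line computation gives $N^W_{22}=D\Phi^W_{22}D^\top$ and $N^W_{12}=-(\Phi^W_{12}+X_+\Phi^W_{22})D^\top$.

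The first two conditions then follow immediately from $\Phi^W\in\Pi_{n,T}$. Since $\Phi^W_{22}\preceq 0$, congruence gives $N^W_{22}=D\Phi^W_{22}D^\top\preceq 0$. For the inclusion $\ker N^W_{22}\subseteq\ker N^W_{12}$, take $v\in\ker N^W_{22}$; then $(D^\top v)^\top\Phi^W_{22}(D^\top v)=0$, and negative semidefiniteness of $\Phi^W_{22}$ forces $\Phi^W_{22}D^\top v=0$. Hence $D^\top v\in\ker\Phi^W_{22}\subseteq\ker\Phi^W_{12}$, the last inclusion being the kernel condition contained in $\Phi^W\in\Pi_{n,T}$, and therefore $N^W_{12}v=-(\Phi^W_{12}+X_+\Phi^W_{22})D^\top v=0$.

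The main obstacle is the third condition $N^W|N^W_{22}\succeq 0$: a direct expansion exhibits $N^W|N^W_{22}$ as $\Phi^W|\Phi^W_{22}\succeq 0$ minus a positive-semidefinite remainder, so its sign is not apparent term by term. I would instead reduce it to nonemptiness of the admissible set. Having secured the kernel condition above, the expansion \eqref{qmi:tenkai} applies to $N^W$ and yields $\matvec{I_n}{Z}^\top N^W\matvec{I_n}{Z}\preceq N^W|N^W_{22}$ for every $Z$ (because $N^W_{22}\preceq 0$), with equality at $Z=-(N^W_{22})^\dagger N^W_{21}$. Consequently $\Zab{n}{n+m}(N^W)\neq\emptyset$ is equivalent to $N^W|N^W_{22}\succeq 0$, so it suffices to exhibit a single element of $\Zab{n}{n+m}(N^W)$. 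The true system supplies one: $(A_s,B_s)\in\Sigma$ together with Proposition~\ref{prop:dist_qmi_noise2sys} gives $[A_s\ B_s]^\top\in\Zab{n}{n+m}(N^W)$, whence the set is nonempty and $N^W|N^W_{22}\succeq 0$.

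I expect this nonemptiness step to be the real crux, since it is the only point at which consistency of the data with some system enters and it cannot be dropped: if the data were inconsistent with every pair $(A,B)$ under the noise model, then $\Sigma$ would be empty and the associated $N^W$ would in general violate $N^W|N^W_{22}\succeq 0$ even though $\Phi^W\in\Pi_{n,T}$. Thus the conclusion $N^W\in\Pi_{n,n+m}$ genuinely relies on the standing fact that $(A_s,B_s)\in\Sigma$.
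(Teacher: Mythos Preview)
Your argument is correct. The paper does not supply its own proof of this proposition; it merely cites \cite[Sec.~5.1]{DDCQMI:Waarde2023_siam_qmi} and offers only the heuristic remark that the linear relation \eqref{dist:noise_description} transports geometric properties from the noise set to the system set. Your direct verification of the three conditions in \eqref{qmi:N_condition} is a clean, self-contained substitute.

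Your identification of where data consistency enters is exactly right and worth emphasizing. The first two conditions ($N^W_{22}\preceq 0$ and $\ker N^W_{22}\subseteq\ker N^W_{12}$) are purely algebraic consequences of $\Phi^W\in\Pi_{n,T}$, valid for arbitrary data matrices. The third condition $N^W|N^W_{22}\succeq 0$, however, genuinely requires $\Sigma\neq\emptyset$: for example, with $n=m=T=1$, $\Phi^W=\sqmat{1}{0}{0}{-1}\in\Pi_{1,1}$, and $X_-=U_-=0$, one gets $N^W_{22}=0$, $N^W_{12}=0$, and $N^W|N^W_{22}=1-X_+^2$, which is negative whenever $|X_+|>1$. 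So the proposition is not a statement about arbitrary triples $(X_+,X_-,U_-)$ but relies on the standing assumption \eqref{dist:stateeq_mat} together with \eqref{dist:qmi_noise}, which guarantees $(A_s,B_s)\in\Sigma$ and hence, via Proposition~\ref{prop:dist_qmi_noise2sys}, nonemptiness of $\Zab{n}{n+m}(N^W)$. Your reduction of the Schur-complement condition to nonemptiness via \eqref{qmi:tenkai} is the natural way to close this gap.
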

The linear relation between the noise and the system matrices explains the inheritance of the geometric properties from the noise set $\Zab{n}{T}(\Phi^W)$ to the system set $\Zab{n}{n+m}(N^W)$.
For example, when \eqref{dist:noise_description} holds and a set of the system noise is convex, a set of the system matrices is also convex.
The proof of Proposition~\ref{prop:dist_ellip} can be found in~\cite[Sec. 5.1]{DDCQMI:Waarde2023_siam_qmi}.

Related to quadratic stabilization, Lyapunov inequality \eqref{dist:qstab} is equivalent to a strict QMI
\begin{equation}\label{dist:qmi_qstab}
   \IAB^\top M\IAB\succ 0,
\end{equation}
with
\begin{equation}\label{dist:dfn_M}
   M\coloneqq\sqmat{P}{0}{0}{-\matvec{I_n}{K}P\matvec{I_n}{K}^\top }.
\end{equation}
Therefore, quadratic stabilization is achieved if and only if all systems that satisfy the QMI \eqref{dist:qmi_sys} also satisfy the QMI \eqref{dist:qmi_qstab}.
We can apply Proposition~\ref{prop:Slem_beta} for the relation between these two QMIs because $\Zab{n}{n+m}(N^W)$ is a matrix ellipsoid.
The matrices $M$ and $N^W$ fulfill the preconditions of Proposition~\ref{prop:Slem_beta} 
because {\small$M_{22}=-\matvec{I_n}{K}P\matvec{I_n}{K}^\top\preceq 0$} and $N^W\in\Pi_{n,n+m}$ owing to Proposition~\ref{prop:dist_ellip}.
Moreover, based on change of variables and Schur complement, we obtain an LMI condition equivalent to the data informativity for quadratic stabilization.
\begin{proposition}[{\cite[Theorem 5.1(a)]{DDCQMI:Waarde2023_siam_qmi}}]\label{prop:dist_result}
   Let $\Phi^W\in\Pi_{q,r}$.
   Assume that the data $(X_+,X_-,U_-)$ and the system noise $W_-$ satisfy \eqref{dist:stateeq_mat} and the system noise satisfies the QMI constraint \eqref{dist:qmi_noise}.
   Then, the data $(X_+,X_-,U_-)$ is informative for quadratic stabilization
   if and only if there exists an $n$-dimensional positive definite matrix $P\succ 0$, a matrix $L\inX{R}{m\times n}$, and a positive scalar $\beta>0$ such that
   \begin{equation}\label{dist:LMI_result}
     \begin{bmatrix}
       P-\beta I_n&0&0&0\\
       0&-P&-L^\top &0\\
       0&-L&0&L\\
       0&0&L^\top &P
     \end{bmatrix}-\sqmat{N^W}{0}{0}{0}\succeq 0,
   \end{equation}
   where $N^W$ is defined as \eqref{dist:dfn_N}.
   If \eqref{dist:LMI_result} is feasible, the controller $K=LP^{-1}\inX{R}{m\times n}$ stabilizes all systems in $\Sigma$.
\end{proposition}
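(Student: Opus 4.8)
The plan is to chain together the equivalences already set up in the text and then reduce the matrix inequality produced by the S-lemma to the affine LMI \eqref{dist:LMI_result}. First I would fix a candidate pair $(K,P)$ with $P\succ0$ and record the two reformulations that make Proposition~\ref{prop:Slem_beta} applicable. By Proposition~\ref{prop:dist_qmi_noise2sys}, $(A,B)\in\Sigma$ is equivalent to the QMI \eqref{dist:qmi_sys} in $N^W$, and a direct expansion of $\IAB^\top M\IAB$ with $M$ as in \eqref{dist:dfn_M} gives $P-(A+BK)P(A+BK)^\top$, so the strict QMI \eqref{dist:qmi_qstab} coincides with the Lyapunov inequality \eqref{dist:qstab}. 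Hence quadratic stabilization of all of $\Sigma$ by $(K,P)$ is precisely the implication \eqref{slem_obj}. Its preconditions hold: $N^W\in\Pi_{n,n+m}$ by Proposition~\ref{prop:dist_ellip}, and $M_{22}=-\matvec{I_n}{K}P\matvec{I_n}{K}^\top\preceq0$ because $P\succ0$.

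Applying the matrix S-lemma then replaces the implication, for fixed $(K,P)$, by the existence of $\alpha\geq0$ and $\beta>0$ with $M-\alpha N^W\succeq\sqmat{\beta I_n}{0}{0}{0}$; collecting quantifiers, informativity is equivalent to feasibility of this inequality over $P\succ0$, $K$, $\alpha\geq0$, $\beta>0$. The next step is to eliminate $\alpha$. I would first rule out $\alpha=0$: the lower-right block of $M-\sqmat{\beta I_n}{0}{0}{0}$ is $M_{22}$, which is both required to be positive semidefinite and satisfies $M_{22}\preceq0$, forcing $M_{22}=0$; but $\matvec{I_n}{K}$ has full column rank and $P\succ0$, so $M_{22}\neq0$, a contradiction. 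Thus $\alpha>0$, and since $M$ is homogeneous of degree one in $P$ for fixed $K$, dividing by $\alpha$ and replacing $(P,\beta)$ by $(P/\alpha,\beta/\alpha)$ normalizes $\alpha=1$.

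It then remains to linearize $M-N^W\succeq\sqmat{\beta I_n}{0}{0}{0}$. Expanding $\matvec{I_n}{K}P\matvec{I_n}{K}^\top$ blockwise and substituting $L=KP$ turns the lower-right block into $\sqmat{-P}{-L^\top}{-L}{-LP^{-1}L^\top}$, whose only nonlinear entry is $-LP^{-1}L^\top$. I would clear it by a Schur complement, appending a fourth block-row and column carrying $P$ on the diagonal and $L,L^\top$ off the diagonal, which yields exactly \eqref{dist:LMI_result}, now affine in $(P,L,\beta)$. Every step --- Proposition~\ref{prop:dist_qmi_noise2sys}, the QMI/Lyapunov identity, the S-lemma, the normalization, the bijective change of variables $K\mapsto L=KP$ on $P\succ0$, and the Schur complement --- is an equivalence, so the statement is an if-and-only-if and reversing the chain recovers the gain as $K=LP^{-1}$. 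The step I expect to be the main obstacle is the multiplier elimination: carefully excluding the degenerate case $\alpha=0$ and invoking homogeneity is what licenses dropping $\alpha$ from the final LMI, and it is the one point where the argument is not a purely mechanical substitution.
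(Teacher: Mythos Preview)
Your proposal is correct and follows essentially the same route the paper sketches in the text preceding the proposition and in the proof of Theorem~\ref{thm:result}: reduce informativity to the QMI implication via Propositions~\ref{prop:dist_qmi_noise2sys} and~\ref{prop:dist_ellip}, apply Proposition~\ref{prop:Slem_beta}, then use the change of variables $L=KP$, a Schur complement, and scaling in $\alpha$ to arrive at \eqref{dist:LMI_result}. Your explicit exclusion of $\alpha=0$ (by forcing $M_{22}=0$ and noting that $\matvec{I_n}{K}$ has full column rank) is a clean way to justify the normalization that the paper only refers to as ``scaling with respect to $\alpha$''.
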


\begin{remark}
   Previous studies also address the situation where system noise is restricted in a subspace~\cite[Rem. 2]{DDCQMI:Waarde2022_TAC_origin}, \cite[Sec. 5.1]{DDCQMI:Waarde2023_siam_qmi}.
   We assume that the system noise $W_-$ can be represented by $W_-=E\hat{W}_-$ with $E\inX{R}{n\times p}$ and $\hat{W}_-\inX{R}{p\times T}$ where $\hat{W}_-$ satisfies the QMI constraint $\hat{W}_-^\top \in\Zab{p}{T}(\hPhi^W)$
   with $\hPhi^W\in\Pi_{p,T}$.
   We additionally assume that $\hPhi_{22}\prec 0$.
   In the above situation, we can also obtain the condition equivalent to the data informativity.
   Indeed, by defining $\Phi^W$ as {\small$\Phi^W\coloneqq\sqmat{E}{0}{0}{I_T}\hPhi^W\sqmat{E}{0}{0}{I_T}^\top$}, 
   $W_-$ also satisfies~\eqref{dist:qmi_noise} and Proposition~\ref{prop:dist_result} can be applied.
\end{remark}


\subsection{Data informativity with EIV}\label{subsec:eiv}
EIV refers to measurement noise on whole input and state data added when collecting them.
The work~\cite{DDCQMI:BISOFFI2024_CSL} has addressed the quadratic stabilization of a discrete-time LTI system with EIV.
Consider an LTI system $x^*(t+1)=A_sx^*(t)+B_su^*(t)$ and the measured data is corrupted by additive noise as
\begin{align*}
   &u(t)=u^*(t)+\delta_u(t),\\
   &x(t)=x^*(t)+\delta_x(t),   
\end{align*}
where $u^*(t)\inX{R}{m}$ and $x^*(t)\inX{R}{n}$ are unknown inputs and states signal, 
$u(t)\inX{R}{m}$ and $x(t)\inX{R}{n}$ are observed inputs and states signal,
and $\delta_u(t)\inX{R}{m}$ and $\delta_x(t)\inX{R}{n}$ are input and measurement noise.
We assume the true system $(A_s,B_s)$ to be unknown.
From this system, we obtain data $(X_+,X_-,U_-)$ satisfying relationship
\begin{equation}\label{eiv:stateeq_mat}
   X_+-\Delta_Z=A_s(X_--\Delta_X)+B_s(U_--\Delta_U),
\end{equation}
where the data $(X_+,X_-,U_-)$ is defined as in Sec.~\ref{subsec:system_noise}
and noise $(\Delta_Z,\Delta_X,\Delta_U)$ is defined as
$\Delta_Z \coloneqq[\delta_x(1)\ \cdots\ \delta_x(T)]\inX{R}{n\times T},$ $\Delta_X \coloneqq[\delta_x(0)\ \cdots\ \delta_x(T-1)]\inX{R}{n\times T},$ $\Delta_U \coloneqq[\delta_u(0)\ \cdots\ \delta_u(T-1)]\inX{R}{m\times T}.$
\if0
\begin{alignat*}{2}
   &\Delta_Z&&\coloneqq[\delta_x(1)\ \delta_x(2)\ \cdots\ \delta_x(T)]\inX{R}{n\times T},\\
   &\Delta_X&&\coloneqq[\delta_x(0)\ \delta_x(1)\ \cdots\ \delta_x(T-1)]\inX{R}{n\times T},\\
   &\Delta_U&&\coloneqq[\delta_u(0)\ \delta_u(1)\ \cdots\ \delta_u(T-1)]\inX{R}{m\times T}.
\end{alignat*}
\fi
Additionally, the noise $\Delta\coloneqq [\Delta_Z^\top\ -\Delta_X^\top\ -\Delta_U^\top]^\top$ is assumed to satisfy the energy bound
\begin{equation}\label{eiv:noise_qmi1}
   \Delta\Delta^\top\preceq \Theta
\end{equation}
with a matrix $\Theta\succeq 0$.
The energy bound \eqref{eiv:noise_qmi1} can be described by a QMI constraint
\begin{equation}\label{eiv:noise_qmi2}
   \matvec{I_{2n+m}}{\Delta^\top}^\top\sqmat{\Theta}{0}{0}{-I_T}\matvec{I_{2n+m}}{\Delta^\top}\succeq 0.
\end{equation}
Note that {\small$\sqmat{\Theta}{0}{0}{-I_T}\in\Pi_{2n+m,T}$}.
Moreover, it is assumed in~\cite{DDCQMI:BISOFFI2024_CSL} that
\begin{equation}\label{eiv:assumption}
   \matvec{X_-}{U_-}\matvec{X_-}{U_-}^\top-\Theta_{22}\succ 0,
\end{equation}
where $\Theta_{22}\inX{S}{n+m}$ is the submatrix of {\small$\Theta=\sqmat{\Theta_{11}}{\Theta_{12}}{\Theta_{21}}{\Theta_{22}}$}.
Assumption \eqref{eiv:assumption} means large SNR because
\begin{equation*}
   \matvec{X_-}{U_-}\matvec{X_-}{U_-}^\top\succ\Theta_{22}\succeq\matvec{\Delta_X}{\Delta_U}\matvec{\Delta_X}{\Delta_U}^\top
\end{equation*}
is satisfied due to \eqref{eiv:noise_qmi1}.
Then the analysis in~\cite{DDCQMI:BISOFFI2024_CSL} shows that the admissible system set becomes a matrix ellipsoid 
and derives an LMI equivalent based on a variant of Proposition~\ref{prop:Slem_beta} to quadratic stabilization under these assumptions.



\section{MAIN RESULT}\label{sec:main}

\subsection{Problem formulation}
We introduce \emph{data perturbation} as a noise model that includes both system noise and EIV.
\begin{definition}\label{dfn:dptb}
   \begin{subequations}
      By open-loop experiments on an unknown discrete-time LTI system, we obtain data $(X_+,X_-,U_-)$ such that
      \begin{equation}\label{stateeq_mat}
         X_+-\Delta_Z=A_s(X_--\Delta_X)+B_s(U_--\Delta_U),
      \end{equation}
      where $(\Delta_Z,\Delta_X,\Delta_U)$ is noise constrained within a linear subspace whose elements are given by
      \begin{equation}\label{lin_image}
         \Delta\coloneqq\matvecvec{\Delta_Z}{-\Delta_X}{-\Delta_U}=E\hDel
      \end{equation}
      with a known $E\inX{R}{n\times p}$ and $\hDel\inX{R}{p\times T}$.
   \end{subequations}
   We call the noise $\Delta$ data perturbation.
\end{definition}

From Definition~\ref{dfn:dptb}, we obtain
\begin{equation}\label{dptb_dfn2}
   [I_n\ A_s\ B_s]\mathbf{X}=[I_n\ A_s\ B_s]E\hDel,
\end{equation}
where $\mathbf{X}\coloneqq [X_+^\top\ -X_-^\top\ -U_-^\top]^\top$.
The data perturbation includes the system noise and the EIV.
Indeed, \eqref{stateeq_mat} coincides with its counterpart under the system noise \eqref{dist:stateeq_mat} and the EIV \eqref{eiv:stateeq_mat} with $E=[I_n\ 0]^\top\inX{R}{(2n+m)\times n}$ and $E=I_{2n+m},$ respectively.

We assume that the data perturbation satisfies a QMI
\begin{equation}\label{qmi_noise}
   \matvec{I_{p}}{\hDel^\top}^\top\hPhi\matvec{I_{p}}{\hDel^\top}\succeq 0,
\end{equation}
with $\hPhi^\top \inX{S}{p+T}$.
Similarly, the QMI \eqref{qmi_noise} coincides with the QMI on system noise \eqref{dist:qmi_noise} with $E=[I_n\ 0]^\top\inX{R}{(2n+m)\times n}$,
and encompasses the energy bound in the EIV setting \eqref{eiv:noise_qmi2}.
Additionally, we make the following assumption.
\begin{assumption}\label{asp:hPhi}
   There holds $\hPhi \in \Pi_{p,T}$.
\end{assumption}
This assumption means that the set of noise constrained by \eqref{qmi_noise} is a matrix ellipsoid.
Assumption~\ref{asp:hPhi} corresponds to $\Phi^W\in\Pi_{n,T}$ in the QMI on system noise \eqref{dist:qmi_noise},
and $\Theta\succeq 0$ in the energy bound on EIV \eqref{eiv:noise_qmi2}.

Here, we define the admissible system set $\Sigma$ under data perturbation.
We say that a system $(A,B)$ is consistent with the data $(X_+,X_-,U_-)$ 
when there exists $\hDel$ such that 
\begin{equation}\label{stateeq_mat2}
   [I_n\ A\ B]\mathbf{X}=[I_n\ A\ B]E\hDel,
\end{equation} and \eqref{qmi_noise} hold.
We define a set of the systems consistent with the data as
\begin{equation}\label{dfn_Sigma}
   \Sigma\coloneqq\left\{(A, B)\middle| \exists \hDel\inX{R}{p\times T} \st \eqref{qmi_noise}, \eqref{stateeq_mat2}\right\}.
\end{equation}
Note that $(A_s,B_s)\in\Sigma$.
We derive the data informativity for quadratic stabilization in Definition~\ref{dfn:dinfo_qstab} with~\eqref{dfn_Sigma}.

\subsection{QMI Characterization of Admissible System Set}
We characterize the admissible system set by a QMI under the data perturbation. 
In the system noise case, a simple substitution can derive the QMI characterization \eqref{dist:qmi_sys}
since the system noise can be uniquely determined when the system matrices 
and the data are given as indicated by \eqref{dist:noise_description}.
However, the data perturbation $\hDel$ cannot be uniquely determined in general because the matrix $[I\ A\ B]E$ in~\eqref{stateeq_mat2} may have a non-trivial kernel.
Therefore, extension to the data perturbation case is not straightforward. 
The following theorem, our first main result, provides a QMI characterization with data perturbation.

\begin{theorem}\label{thm:sys_set}
   Let Assumption~\ref{asp:hPhi} hold.
   For $\Sigma$ in \eqref{dfn_Sigma},
   define $\bar{\Sigma}$ as
   \begin{equation}\label{dfn_barsigma}
      \bar{\Sigma}\coloneqq\left\{(A,\ B)\middle|\IAB^\top N\IAB\succeq 0\right\},
   \end{equation}
   where $N\inX{S}{2n+m}$ is defined as
   \begin{equation}\label{dfn_N}
     N=[E\ \mathbf{X}]\hat{\Phi}[E\ \mathbf{X}]^\top.
   \end{equation}
   Then, we have $\Sigma\subseteq\bar{\Sigma}$.
   In particular, for $E_0\coloneqq [I_n\ 0]^\top\inX{R}{(2n+m)\times n}$,
   if $\im E_0\subseteq \im E$ or $\hPhi_{22}\prec 0$, then we have $\Sigma=\bar{\Sigma}$.
\end{theorem}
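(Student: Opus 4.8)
The plan is to route both inclusions through two auxiliary matrices,
\[
   P\coloneqq\IAB^\top E,\qquad W\coloneqq\IAB^\top\mathbf{X},
\]
so that $\IAB^\top N\IAB=[P\ \ W]\hPhi[P\ \ W]^\top$ while the consistency relation \eqref{stateeq_mat2} becomes simply $P\hDel=W$. For $\Sigma\subseteq\bar\Sigma$ I would take $(A,B)\in\Sigma$ with a witnessing $\hDel$, insert $W=P\hDel$, and factor $P$ out on both sides to obtain
\[
   \IAB^\top N\IAB=P\matvec{I_p}{\hDel^\top}^\top\hPhi\matvec{I_p}{\hDel^\top}P^\top,
\]
whose inner factor is exactly the left-hand side of the QMI \eqref{qmi_noise} and hence positive semidefinite. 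This direction needs neither Assumption~\ref{asp:hPhi} nor the extra hypotheses.

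For the converse I would first record a Schur-complement identity. Since $\hPhi\in\Pi_{p,T}$ guarantees $\ker\hPhi_{22}\subseteq\ker\hPhi_{12}$, the same completion of squares as in \eqref{qmi:tenkai} gives, with $\hDel_c\coloneqq-\hPhi_{12}\hPhi_{22}^\dagger$ and $D\coloneqq W-P\hDel_c$,
\[
   \IAB^\top N\IAB=P\,(\hPhi|\hPhi_{22})\,P^\top+D\,\hPhi_{22}\,D^\top,
\]
so that $(A,B)\in\bar\Sigma$ is equivalent to $D(-\hPhi_{22})D^\top\preceq P(\hPhi|\hPhi_{22})P^\top$. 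The converse then reduces to a lifting problem: produce $\Xi$ with $P\Xi=D$ and $\Xi(-\hPhi_{22})\Xi^\top\preceq\hPhi|\hPhi_{22}$; setting $\hDel\coloneqq\hDel_c+\Xi$ recovers consistency ($P\hDel=W$) and, by the same completed-square identity, the QMI \eqref{qmi_noise}.

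To solve the lifting problem I would diagonalize $-\hPhi_{22}=U_1\Lambda U_1^\top$ with $\Lambda\succ0$ and $U_1$ an orthonormal basis of $\im\hPhi_{22}$, completed by $U_2$ spanning $\ker\hPhi_{22}$. Writing $\bar G\coloneqq\Xi U_1\Lambda^{1/2}$, the requirements become $\bar G\bar G^\top\preceq\hPhi|\hPhi_{22}$ together with $P\bar G=DU_1\Lambda^{1/2}$, while the remaining part of $P\Xi=D$ is the separate block $P(\Xi U_2)=DU_2$. Fixing a factor $Q^\top Q=\hPhi|\hPhi_{22}\succeq0$, the hypothesis reads $HH^\top\preceq(PQ^\top)(PQ^\top)^\top$ for $H\coloneqq DU_1\Lambda^{1/2}$, so Douglas' factorization lemma yields $C$ with $\norm{C}\le1$ and $PQ^\top C=H$; then $\bar G=Q^\top C$ meets both requirements. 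The block $P(\Xi U_2)=DU_2$ is precisely where the two sufficient conditions enter: if $\hPhi_{22}\prec0$ then $U_2$ is absent and nothing remains to solve; if $\im E_0\subseteq\im E$ then, since $\IAB^\top E_0=I_n$, writing $E_0=ER$ gives $PR=I_n$, so $P$ is surjective and $P(\Xi U_2)=DU_2$ is always solvable. Reassembling $\Xi=\bar G\Lambda^{-1/2}U_1^\top+(\Xi U_2)U_2^\top$ finishes the converse.

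The main obstacle is the converse, and within it the handling of $\ker\hPhi_{22}$. One must justify the completed-square identity with the pseudo-inverse — which is exactly what $\ker\hPhi_{22}\subseteq\ker\hPhi_{12}$ supplies — and then observe that the kernel directions of $\hPhi_{22}$ are invisible to the QMI yet still constrained by $P\Xi=D$, so that solvability along those directions is governed solely by surjectivity of $P$. Separating the range directions (settled by Douglas) from the kernel directions (settled by either $\hPhi_{22}\prec0$ or full row rank of $P$) is the crux; the remaining manipulations are routine.
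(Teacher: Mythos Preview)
Your proposal is correct and follows essentially the same route as the paper: both reduce the forward inclusion to a one-line congruence and the converse to a Douglas-type factorization (the paper's Lemma~\ref{lem:param}) on the range of $\hPhi_{22}$, with the kernel directions handled via the identity $[I_n\ A\ B]E_0=I_n$ that makes $P$ surjective. The only cosmetic difference is that you diagonalize $-\hPhi_{22}$ and phrase the converse as an abstract lifting problem, whereas the paper uses a Cholesky factor $R$ of $-\hPhi_{22}$ and writes down explicit pseudo-inverse formulas for $\hDel$; the underlying mechanism is identical.
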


\begin{proof}
   First, we prepare some notations and transform the conditions defining $\Sigma$ and $\bar{\Sigma}$.
   Let $S\coloneqq [I_n\ A\ B]$, and then the equation \eqref{stateeq_mat2} can be written as 
   \begin{equation}\label{prf_stateeq}
      S\mathbf{X}=SE\hat{\Delta}.
   \end{equation}
   In a manner similar to \eqref{qmi:Z_ellip}, we rewrite the QMI \eqref{qmi_noise} as
   \begin{equation}\label{prf_expand_qmi_noise}
      \hat{Q}\hat{Q}^\top-(\hDel-\hDel_c)R\left((\hDel-\hDel_c)R\right)^\top\succeq 0 ,
   \end{equation}
   where the matrices $\hat{Q}\inX{R}{p\times p}$ and $R\inX{R}{T\times T}$ are the Cholesky decomposition of the matrices $\hPhi|\hPhi_{22}$ and $-\hPhi_{22}$
   such that $\hPhi|\hPhi_{22}=\hat{Q}\hat{Q}^\top$ and $-\hPhi_{22}=RR^\top$ respectivity,
   and $\hDel_c=-\hPhi_{12}\hPhi_{22}^\dagger\inX{R}{p\times T}$.
   By applying Lemma~\ref{lem:param} in Appendix for \eqref{prf_expand_qmi_noise}, 
   the QMI \eqref{qmi_noise} holds if and only if there exists a matrix $M_1\inX{R}{p\times T}$ such that
   \begin{equation}\label{prf_qmi_noise}
      (\hDel-\hDel_c)R=\hat{Q}M_1,\ M_1M_1^\top\preceq I_p.
   \end{equation}
   Thus, $(A,B)\in\Sigma$ if and only if 
   there exists $\hDel\inX{R}{p\times T}$ that satisfies \eqref{prf_stateeq} and \eqref{prf_qmi_noise} with some matrix $M_1\inX{R}{p\times T}$.   
   Moreover, the condition for $(A,B)\in\bar{\Sigma}$ can be transformed into
   \begin{subequations}
      {\small
      \begin{align}
        &S[E\ \mathbf{X}] \hPhi[E\ \mathbf{X}]S^\top\succeq 0\notag\\
        \Leftrightarrow&S\left(E\hat{Q}\hat{Q}^\top E^\top\!-(\mathbf{X}\!-\!E\hDel_c)RR^\top(\mathbf{X}\!-\!E\hDel_c)\right)S^\top\succeq 0\label{prf_expand_qmi1}\\
        \Leftrightarrow&SQ(SQ)^\top\!-S(\mathbf{X}-\Delta_c)R\left(S(\mathbf{X}-\Delta_c)R\right)^\top\succeq 0\label{prf_expand_qmi2},
      \end{align}}
   \end{subequations}

   \noindent
   where $Q=E\hat{Q}\inX{R}{(2n+m)\times p}$ and $\Delta_c=E\hDel\inX{R}{(2n+m)\times T}$,
   where \eqref{prf_expand_qmi1} is derived using \eqref{N_bunkai}.
   By applying Lemma~\ref{lem:param} again for \eqref{prf_expand_qmi2}, 
   $(A,B)\in\bar{\Sigma}$ if and only if there exists a matrix $M_2\inX{R}{p\times T}$ such that
   \begin{equation}\label{prf_qmi_sys}
      S(\mathbf{X}-\Delta_c)R=SQM_2,\ M_2M_2^\top\preceq I_p.
   \end{equation}

   Second, we show that $\Sigma\subseteq\bar{\Sigma}$.
   Assume that $(A,B)\in\Sigma$, i.e., there exists $\hDel\inX{R}{p\times T}$ such that \eqref{prf_stateeq} and \eqref{prf_qmi_noise} hold with some matrix $M_1\inX{R}{p\times T}$.
   We have $SE(\hDel-\hDel_c)R=S(\mathbf{X}-\Delta_c)R=SQM_1$ from \eqref{prf_stateeq} and \eqref{prf_qmi_noise}.
   Since $M_1M_1^\top\preceq I_p$, by taking $M_2=M_1$, $M_2$ satisfies \eqref{prf_qmi_sys}.
   Therefore, we obtain $(A,B)\in\bar{\Sigma}$ and $\Sigma\subseteq\bar{\Sigma}$.

   Finally, we show that $\bar{\Sigma}\subseteq\Sigma$ under $\hPhi_{22}\prec 0$ or $\im E_0\subseteq\im E$.
   Assume that $(A,B)\in\bar{\Sigma}$, i.e., there exists a matrix $M_2$ such that \eqref{prf_qmi_sys} holds.
   Then, we prove $(A,B)\in\Sigma$ by explicitly constructing $\hDel$ 
   such that \eqref{prf_stateeq} and \eqref{prf_qmi_noise} hold with some matrix $M_1\inX{R}{p\times T}$.
   When $\hPhi_{22}\prec 0$, we define $\hDel\inX{R}{p\times T}$ as
   \begin{equation}\label{def_Delta2}
      \hDel\coloneqq \hat{Q}(SQ)^\dagger S(\mathbf{X}-\Delta_c)+\hDel_c.
   \end{equation}
   We confirm that \eqref{prf_stateeq} is fulfilled with $\hDel$.
   Since $R$ is invertible due to $\hPhi_{22}\prec 0$, 
   \begin{equation}\label{prf_im}
      S(\mathbf{X}-\hDel_c) =SQM_2R^{-1}.
   \end{equation}
   By substituting \eqref{def_Delta2} into \eqref{prf_stateeq},
   we obtain
   \begin{equation*}
      \begin{split}
        SE\hDel=&SQ(SQ)^\dagger S(\mathbf{X}-\Delta_c)+S\Delta_c\\
        =&S(\mathbf{X}-\Delta_c)+S\Delta_c=S\mathbf{X},
      \end{split}
   \end{equation*}
   where we use $SQ(SQ)^\dagger S(\mathbf{X}-\Delta_c)=S(\mathbf{X}-\Delta_c)$ due to \eqref{prf_im}.
   Next, we confirm that \eqref{prf_qmi_noise} is fulfilled with $\hDel$ and $M_1\coloneqq (SQ)^\dagger SQ M_2$.
   $M_1M_1^\top\preceq I_p$ is fulfilled because $M_2M_2^\top\preceq I_p$ and $(SQ)^\dagger SQ\preceq I_p$ due to Lemma~\ref{lem:prjmatrix} in Appendix.
   By substituting \eqref{def_Delta2} into \eqref{prf_qmi_noise}, we obtain
   \begin{align*}
      (\hDel-\hDel_c)R=&\hat{Q}(SQ)^\dagger S(\mathbf{X}-\Delta_c)R\\
      =&\hat{Q}(SQ)^\dagger SQM_2\\
      =&SQM_1.
   \end{align*}
   Therefore, we obtain $(A,B)\in\Sigma$ under $\hPhi_{22}\prec 0$.

   When $\im E_0\subseteq\im E$, we define $\hDel$ as
   \begin{equation}\label{def_Delta3}
      \hDel\coloneqq \hat{Q}(SQ)^\dagger S(\mathbf{X}-\Delta_c)RR^\dagger+FS(\mathbf{X}-\Delta_c)(I-RR^\dagger)+\hDel_c,
   \end{equation}
   where $F\inX{R}{p\times n}$ is a matrix such that $E_0=EF$.
   The exsitence of $F$ is ensured by $\im E_0\subseteq\im E$.
   We confirm that \eqref{prf_stateeq} is fulfilled with $\hDel$.
   By substituting \eqref{def_Delta3} into \eqref{prf_stateeq},
   we obtain
   \begin{align*}
        SE\hDel
        =&SQ(SQ)^\dagger S(\mathbf{X}-\Delta_c)RR^\dagger\\
        &+SE_0S(\mathbf{X}-\Delta_c)(I-RR^\dagger)+S\Delta_c\\
        =&S(\mathbf{X}-\Delta_c)RR^\dagger+S(\mathbf{X}-\Delta_c)(I-RR^\dagger)+S\Delta_c\\
        =&S\mathbf{X},
   \end{align*}
   where we use $SE_0=I_n$ and $SQ(SQ)^\dagger S(\mathbf{X}-\Delta_c)R=S(\mathbf{X}-\Delta_c)R$ due to \eqref{prf_qmi_sys}.
   Next, we confirm that \eqref{prf_qmi_noise} is fulfilled with $\hDel$ and $M_1\coloneqq (SQ)^\dagger SQ M_2$.
   We can obtain \eqref{prf_qmi_noise} in the same way as in the case of $\hPhi_{22}\prec 0$.
   Therefore, we also obtain $(A,B)\in\Sigma$ under $\im E_0\subseteq\im E$.
\end{proof}

Theorem~\ref{thm:sys_set} provides a QMI characterization of $\Sigma$ by showing an inclusion relationship between $\Sigma$ and $\bar{\Sigma}$, and moreover identifies a condition such that those sets are equal.

\begin{figure*}[t]
   \centering
   \begin{minipage}[b]{0.5\textwidth}
       \centering
       \includegraphics[height=1.2in]{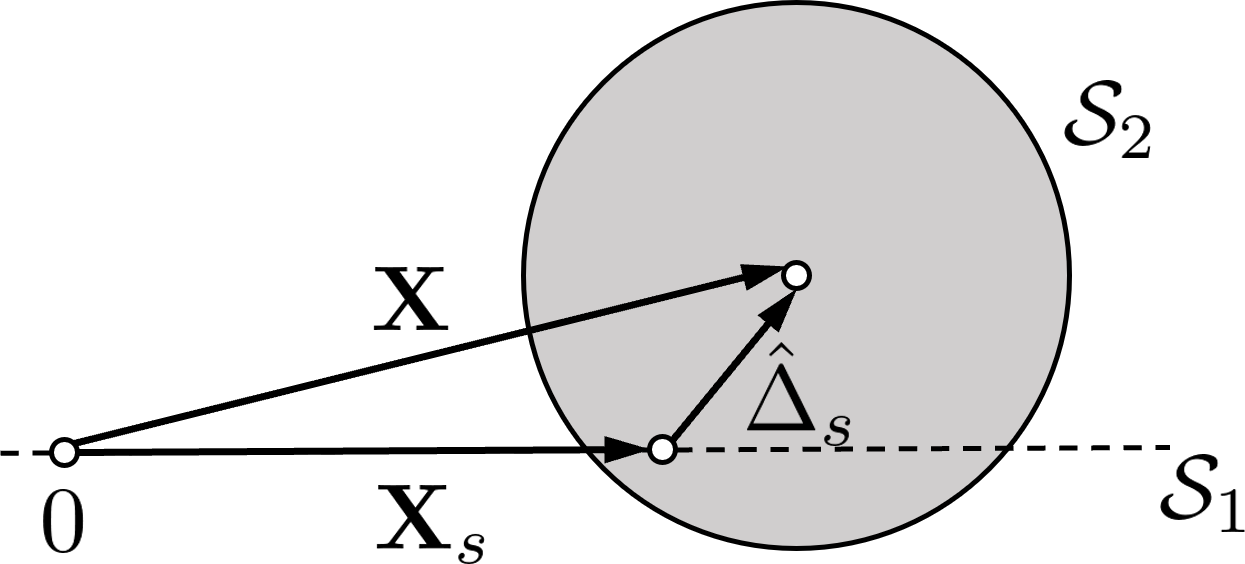}
       \subcaption{Unknown noise-free data $\mathbf{X}_s$}
       \label{fig:prj1}
   \end{minipage}%
   ~ 
   \begin{minipage}[b]{0.5\textwidth}
       \centering
       \includegraphics[height=1.2in]{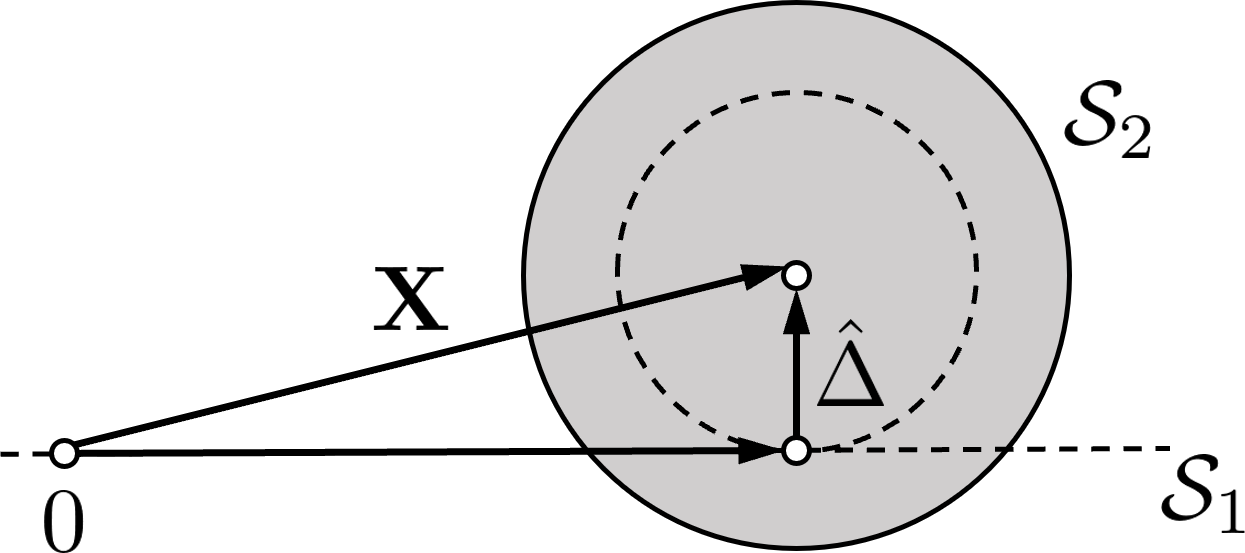}
       \subcaption{Projection of $\mathbf{X}$ onto $\mathcal{S}_2$}
       \label{fig:prj2}
   \end{minipage}
   \caption{Interpretation of $\hDel$}
   \label{fig:prj}
\end{figure*}
We provide a geometric interpretation to $\hDel$ introduced for proving $\bar{\Sigma}\subseteq \Sigma$.
We explain that $\mathbf{X}-E\hDel$ becomes the projection of the observed data $\mathbf{X}$ onto a set of the noise-free data.
We assume that $E=I_{2n+m}$, $\hDel_c=0$, and $\hat{Q}$ and $R$ are invertible for simplicity.
Then, both $\hDel$ defined as \eqref{def_Delta2} and \eqref{def_Delta3} are written as
\begin{equation}\label{dfn_Delta4}
   \hDel=\hat{Q}^{-1}(SQ)^\dagger S\mathbf{X}.
\end{equation}
Moreover, the QMI \eqref{qmi_noise} can be reduced to a norm constraint $\norm{\hat{Q}^{-1}\hDel R}_2\leq 1$, where $\norm{\cdot}_2$ is the induced 2-norm.

We denote an unknown true data perturbation as $\hDel_s$ and an unknown noise-free data as $\mathbf{X}_s\coloneqq \mathbf{X}-\hDel_s$.
Then, we obtain $\mathbf{X}_s\in\mathcal{S}_1\coloneqq \{Z\inX{R}{(2n+m)\times T}|\ SZ=0\}$ due to \eqref{prf_stateeq}
and $\mathbf{X}_s\in\mathcal{S}_2\coloneqq\{Z\inX{R}{(2n+m)\times T}| \norm{\hat{Q}^{-1}(Z-\mathbf{X})R}_2\leq 1\}$ due to $\norm{\hat{Q}^{-1}\hDel R}_2\leq 1$.
Fig~\ref{fig:prj1} illustrates a possible position of $\mathbf{X}_s$.
Since the noise-free data $\mathbf{X_s}$ is unknown, we consider the closest point in $\mathcal{S}_1$.
We project the data $\mathbf{X}$ onto the set $\mathcal{S}_1$ with the distance $d(Z_1,Z_2)\coloneqq\norm{\hat{Q}^{-1}(Z_1-Z_2)R}_2$:
\begin{equation}\label{prj}
   \begin{split}
   \min_{Z}\ &d(Z,\mathbf{X})\\
   \mathrm{s.t.}\ &Z\in\mathcal{S}_1.
   \end{split}
\end{equation}
Here, $\mathbf{X}-\hDel$ is one of the solutions of \eqref{prj}, which is illustrated in Fig.~\ref{fig:prj2}.
After all, by taking the hypothetical $\hat{\Delta}$ such that $\mathbf{X}-E\hDel$ becomes the projection onto the hyperplane corresponding to the noise-free trajectories, we derive the system set $\bar{\Sigma}$.

\subsection{Data informativity under Data Perturbation}
Quadratic stabilization is achieved when all systems in $\bar{\Sigma}$ satisfy the QMI \eqref{dist:qmi_qstab}.
However, unlike the case in Sec.~\ref{subsec:system_noise}, Proposition~\ref{prop:Slem_beta} cannot be applied 
because it requires the admissible system set to be a matrix ellipsoid.
We observe this issue and derive an extended matrix S-lemma that eliminates the need for the property.

The matrix $N$ needs to fulfill $N\in\Pi_{n,n+m}$ for applying Proposition~\ref{prop:Slem_beta}.
This condition means that the set $\bar{\Sigma}$ is as a matrix ellipsoid.
Under the system noise setting, Proposition~\ref{prop:dist_ellip} ensures that the admissible system set is a matrix ellipsoid.
However, under data perturbation setting, the condition on the admissible system set is not satisfied generally.
To explain this issue, we denote $E=[{E_+}^\top\ {E_-}^\top]^\top$ where $E_+\inX{R}{n\times p}$ and $E_-\inX{R}{(n+m)\times p}$.
Then, $N$ in \eqref{dfn_N} can be written as 
\begin{equation}
   N=\begin{bmatrix}
     E_+&X_+\\\hline E_-&-Z_-
   \end{bmatrix}
   \sqmat{\hPhi_{11}}{\hPhi_{12}}{\hPhi_{21}}{\hPhi_{22}}
   \begin{bmatrix}
      E_+&X_+\\\hline E_-&-Z_-
   \end{bmatrix}^\top,
\end{equation}
where $Z_-=[X_-^\top\ U_-^\top]^\top$.
Then, we have $N_{22}=[E_-\ -Z_-]\hPhi[E_-\ -Z_-]^\top\inX{S}{n+m}$.
In the case of system noise, $E=[I_n\ 0]^\top \inX{R}{(2n+m)\times n}$ and then
we obtain $N_{22}=Z_-\hPhi_{22}Z_-$ because $E_-=0$.
Since $\hPhi\preceq 0$ due to $\hPhi\in\Pi_{p,T}$, we have $N_{22}\preceq 0$.
However, when $E_-\neq 0$, $N_{22}\preceq 0$ cannot be guaranteed,
and thus $N\in\Pi_{n,n+m}$ does not necessarily hold.

\begin{remark}\label{rem:eiv_ellip}
   The previous study in energy bounded EIV setting~\cite{DDCQMI:BISOFFI2024_CSL} assumes \eqref{eiv:assumption}.
   Our model can be reduced to the same setting by choosing $E=I_{2n+m}$ and {\small$\hPhi=\sqmat{\Theta}{0}{0}{-I}$}, where $\Theta\succeq 0$.
   Then, we have that {\small $N_{22}=\Theta_{22}-\matvec{X_-}{U_-}\matvec{X_-}{U_-}^\top$}.
   Therefore, the assumption \eqref{eiv:assumption} implies $N_{22}\prec 0$, which is a sufficient condition for $N\in\Pi_{n,n+m}$.
\end{remark}

To overcome this issue, we derive the following theorem, our second main result, developing an extended matrix S-lemma.
\begin{theorem}[Extended matrix S-lemma]\label{thm:Slem_new}
   Let $M, N\inX{S}{q+r}$.
   Assume that $\Zqr(N)$ is nonempty and $M_{22}\preceq 0$ and $\ker M_{22}\subseteq \ker M_{12}$ hold.
   Then,
   \begin{equation}\label{slem_obj3}
     \IqZ^\top N\IqZ\succeq 0\Rightarrow\IqZ^\top M\IqZ\succ 0
   \end{equation}
   if and only if there exist scalars $\alpha\geq 0$ and $\beta>0$ such that
   \begin{equation}\label{slem_lmi3}
     M-\alpha N\succeq \sqmat{\beta I_q}{0}{0}{0}.
   \end{equation}
\end{theorem}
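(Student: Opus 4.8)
The plan is to prove the two implications separately, with the reverse direction (\eqref{slem_lmi3} $\Rightarrow$ \eqref{slem_obj3}) being routine and the forward direction (\eqref{slem_obj3} $\Rightarrow$ \eqref{slem_lmi3}) carrying the real content, where the hypotheses on $M$ take over the role that $N\in\Pi_{q,r}$ played in Proposition~\ref{prop:Slem_beta}. For the ``if'' part, assume \eqref{slem_lmi3} holds for some $\alpha\geq0$, $\beta>0$ and take any $Z$ with $\IqZ^\top N\IqZ\succeq0$. I would use the splitting $\IqZ^\top M\IqZ=\IqZ^\top(M-\alpha N)\IqZ+\alpha\,\IqZ^\top N\IqZ$ and bound the first term below by $\IqZ^\top\sqmat{\beta I_q}{0}{0}{0}\IqZ=\beta I_q\succ0$ via \eqref{slem_lmi3}, while the second is $\succeq0$ since $\alpha\geq0$; their sum is $\succ0$. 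This direction uses neither the nonemptiness of $\Zqr(N)$ nor the kernel condition on $M$.

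For the ``only if'' part, assume \eqref{slem_obj3}. The key step is that the hypotheses then force $M\in\Pi_{q,r}$. Since $\Zqr(N)\neq\emptyset$, fix $Z_0\in\Zqr(N)$. Because $M_{22}\preceq0$ and $\ker M_{22}\subseteq\ker M_{12}$, the expansion \eqref{qmi:tenkai} is valid for $M$ and gives $\begin{bmatrix}I_q\\Z_0\end{bmatrix}^\top M\begin{bmatrix}I_q\\Z_0\end{bmatrix}=M|M_{22}+(Z_0+M_{22}^\dagger M_{21})^\top M_{22}(Z_0+M_{22}^\dagger M_{21})$. The second summand is $\preceq0$ because $M_{22}\preceq0$, so $M|M_{22}\succeq\begin{bmatrix}I_q\\Z_0\end{bmatrix}^\top M\begin{bmatrix}I_q\\Z_0\end{bmatrix}\succ0$, the last inequality by \eqref{slem_obj3}. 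Hence $M|M_{22}\succ0$, and together with $M_{22}\preceq0$ and the kernel condition this is exactly $M\in\Pi_{q,r}$: the target QMI set is itself a matrix ellipsoid. This recovers from the target side the geometric structure that $N$ is no longer assumed to possess, and it is what powers the remaining argument.

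It then remains to produce $\alpha,\beta$. I would argue by contradiction through SDP duality: if \eqref{slem_lmi3} is infeasible for all $\alpha\geq0$, $\beta>0$, a theorem of alternatives in $\mathcal{S}^{q+r}$ yields a nonzero $Y\succeq0$ whose pairings $\langle M,Y\rangle$, $\langle N,Y\rangle$ and $\mathrm{tr}\,Y_{11}$ carry the signs needed to manufacture, from $Y$, a concrete $Z^\star\in\Zqr(N)$ at which $\begin{bmatrix}I_q\\Z^\star\end{bmatrix}^\top M\begin{bmatrix}I_q\\Z^\star\end{bmatrix}\succ0$ fails, contradicting \eqref{slem_obj3}. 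The main obstacle --- and precisely the reason the classical statement required $N\in\Pi_{q,r}$ --- is this realizability step: turning a matrix-valued dual certificate $Y$ into an admissible lifted variable of the form $\begin{bmatrix}I_q\\Z^\star\end{bmatrix}$, since a priori $Y$ only controls traces rather than matrix definiteness. I expect to clear it by exploiting the decomposition of $M$ supplied by $M\in\Pi_{q,r}$, equivalently by a congruence that normalizes the target toward a ball-type QMI $I-\tilde{Z}^\top\tilde{Z}\succ0$, so that the convexity underlying the separation is furnished by the target ellipsoid $\Zqr(M)$ rather than by the (possibly nonconvex) source set $\Zqr(N)$. In effect this reruns the necessity argument behind Proposition~\ref{prop:Slem_beta} with its geometric hypothesis transferred from $N$ to $M$.
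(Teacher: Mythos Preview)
Your ``if'' direction and the opening move of the ``only if'' direction---using nonemptiness of $\Zqr(N)$ together with the expansion \eqref{qmi:tenkai} to deduce $M|M_{22}\succ0$ and hence $M\in\Pi_{q,r}$---are correct and coincide with the paper's argument.

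Where you diverge is in how you finish. The paper does \emph{not} run a fresh duality argument exploiting the ellipsoidal structure of $\Zqr(M)$. Instead, it proves a separate structural result (Lemma~\ref{lem:Ninfo}): if $\Zqr(N)\subseteq\Zqr(M)$ with $\Zqr(N)$ nonempty, $M\in\Pi_{q,r}$, and $M$ has a negative eigenvalue, then necessarily $N\in\Pi_{q,r}$. In other words, the hypotheses plus \eqref{slem_obj3} \emph{force} the source set $\Zqr(N)$ to be a matrix ellipsoid after all. The paper then splits on whether $M_{22}=0$ (trivial, take $\alpha=0$) or $M_{22}\neq0$ (apply Lemma~\ref{lem:Ninfo} to get $N\in\Pi_{q,r}$, then invoke Proposition~\ref{prop:Slem_beta} verbatim). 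So the convexity that carries the separation is still that of $\Zqr(N)$, only now it is a \emph{conclusion} rather than an assumption.

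Your proposed route has a real gap at exactly the step you flag. Saying that the convexity ``is furnished by the target ellipsoid $\Zqr(M)$'' does not by itself let you realize an SDP dual certificate $Y\succeq0$ as a lifted point $\begin{bmatrix}I_q\\Z^\star\end{bmatrix}$ with $Z^\star\in\Zqr(N)$: the obstruction is precisely membership in the possibly nonconvex $\Zqr(N)$, and knowing that $\Zqr(M)$ is convex says nothing about that. The known necessity proofs of the matrix S-lemma use the ellipsoidal parameterization of the \emph{source} set to pass from trace-type dual information to a genuine matrix witness; transferring this role to $M$ is not a symmetric swap, and your sketch does not supply the missing mechanism. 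If your duality argument could be completed without ever touching the structure of $N$, it would in particular re-derive Lemma~\ref{lem:Ninfo} implicitly---and that lemma's proof in the paper is nontrivial (several contradiction arguments establishing $N_{22}\preceq0$, $\ker N_{22}\subseteq\ker N_{12}$, and $N|N_{22}\succeq0$ in turn). Absent those details, the forward direction in your proposal is incomplete.
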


Compared with Proposition~\ref{prop:Slem_beta}, Theorem~\ref{thm:Slem_new} does not require $N \in \Pi_{q,r}$ but instead additionally requires $\ker M_{22} \subseteq \ker M_{12}$. 
Note that nonemptiness of $\Zqr(N)$ is a necessary condition for $N\in\Pi_{q,r}$. 
In this sense, Theorem~\ref{thm:Slem_new} requires a weaker condition for $N$, which corresponds to the admissible system set,
but a stronger condition for $M$, which corresponds to the control objective, than Proposition~\ref{prop:Slem_beta}.
The following lemma is the key to proving Theorem~\ref{thm:Slem_new}.

\begin{lemma}\label{lem:Ninfo}
   Let $M,\,N\inX{S}{q+r}$.
   Assume that $\Zqr(N)\subseteq\Zqr(M)$, $\Zqr(N)$ is nonempty, $M$ has at least one negative eigenvalue, and $M\in\Pi_{q,r}$.
   Then $N\in\Pi_{q,r}$.
\end{lemma}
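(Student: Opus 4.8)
The plan is to verify the three defining conditions of $\Pi_{q,r}$ for $N$ --- namely $N_{22}\preceq0$, $\ker N_{22}\subseteq\ker N_{12}$, and $N|N_{22}\succeq0$ --- in that order, reading off the needed geometry from the hypotheses on $M$. First I would record two structural facts about $M$. Since $M\in\Pi_{q,r}$, the factorization \eqref{N_bunkai} is a congruence, so by Sylvester's law of inertia the negative eigenvalues of $M$ are exactly those of $M_{22}$; hence ``$M$ has a negative eigenvalue'' is equivalent to $M_{22}\neq0$, and $\ker M_{22}$ is a proper subspace of $\mathbb{R}^r$. Writing $\Phi_M(Z):=\begin{bmatrix}I_q\\Z\end{bmatrix}^\top M\begin{bmatrix}I_q\\Z\end{bmatrix}$ and likewise $\Phi_N$, a direct expansion along a rank-one direction $yc^\top$ shows that $\Phi_M(Z+tyc^\top)$ is constant in $t$ when $y\in\ker M_{22}$ (using $\ker M_{22}\subseteq\ker M_{12}$), whereas its $t^2$-term drives $c^\top\Phi_M(\cdot)\,c\to-\infty$ when $y\notin\ker M_{22}$ and $c\neq0$. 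Thus a ray $Z_0+tyc^\top$ with $c\neq0$ stays in $\Zqr(M)$ if $y\in\ker M_{22}$ and leaves it for large $t$ if $y\notin\ker M_{22}$.

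For $N_{22}\preceq0$ I would argue by contradiction using rays forced to lie in $\Zqr(M)$. Fix $Z_0\in\Zqr(N)$ (nonempty by hypothesis). Given $\bar y$ with $\bar y^\top N_{22}\bar y=\lambda>0$, expanding gives $\Phi_N(Z_0+t\bar y c^\top)=\Phi_N(Z_0)+t(ac^\top+ca^\top)+t^2\lambda cc^\top$ with $a=(N_{12}+Z_0^\top N_{22})\bar y$; choosing $c=a$ (or any $c\neq0$ if $a=0$) makes every $t$-dependent term positive semidefinite, so the whole ray lies in $\Zqr(N)\subseteq\Zqr(M)$. By the dichotomy above this forces $\bar y\in\ker M_{22}$. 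Contrapositively, $\bar y^\top N_{22}\bar y\le0$ for every $\bar y\notin\ker M_{22}$; since $\ker M_{22}$ is a proper subspace its complement is dense, and continuity of $\bar y\mapsto\bar y^\top N_{22}\bar y$ upgrades this to $N_{22}\preceq0$.

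The kernel condition $\ker N_{22}\subseteq\ker N_{12}$ is the main obstacle. Suppose $N_{22}y_0=0$ but $a_0:=N_{12}y_0\neq0$. The same ray trick (now $\lambda=0$, $c=a_0$) produces a ray in $\Zqr(N)$ with direction $y_0a_0^\top$, forcing $y_0\in\ker M_{22}$ and hence $y_0\in\ker M_{12}$. The difficulty is that $y_0$ now lies among the unbounded (``flat'') directions of $\Zqr(M)$, so a ray along $y_0$ cannot by itself escape $\Zqr(M)$, and a naive rank-one probe $Z_1+ty_0a_0^\top$ becomes positive semidefinite only under a range/completion condition on $\Phi_N(Z_1)$. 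I would resolve this in two stages: (i) starting from $Z_0$, perturb along $u\,a_0^\top$ with $u\notin\ker M_{22}$ to reach $Z_1\notin\Zqr(M)$, noting that this perturbation leaves $\Phi_N$ unchanged on $a_0^\perp$, so $\Phi_N(Z_1)\succeq0$ there; (ii) use that further perturbations $y_0w^\top$ with $w\perp a_0$ are flat for $M$ and alter only the cross block of $\Phi_N(Z_1)$ between $a_0$ and $a_0^\perp$ while fixing both its $a_0^\perp$-block and its $a_0a_0^\top$-entry. Choosing $w$ to annihilate that cross block and then adding a large multiple of $y_0a_0^\top$ renders $\Phi_N$ block-diagonal with nonnegative blocks, i.e.\ positive semidefinite, while $\Phi_M$ remains $\Phi_M(Z_1)\not\succeq0$. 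This produces a point of $\Zqr(N)\setminus\Zqr(M)$, contradicting the inclusion; hence $\ker N_{22}\subseteq\ker N_{12}$.

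Finally, $N|N_{22}\succeq0$ comes essentially for free: with $N_{22}\preceq0$ and the kernel condition in hand, the completion-of-squares identity \eqref{qmi:tenkai} applies to $N$, so for $Z_0\in\Zqr(N)$ one has $N|N_{22}=\Phi_N(Z_0)-(Z_0-Z_c)^\top N_{22}(Z_0-Z_c)$ with $Z_c=-N_{22}^\dagger N_{21}$; since $N_{22}\preceq0$ the subtracted term is negative semidefinite, whence $N|N_{22}\succeq\Phi_N(Z_0)\succeq0$. Collecting the three conditions gives $N\in\Pi_{q,r}$. I expect the kernel step to be the crux, precisely because there the forbidden direction $y_0$ hides inside the unbounded directions of $\Zqr(M)$, so the separating point must be engineered through the flat perturbations rather than read off from a single ray.
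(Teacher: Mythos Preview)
Your proposal is correct and follows the same three-step skeleton as the paper's proof: first $N_{22}\preceq0$ via rank-one rays and a density/continuity argument, then $\ker N_{22}\subseteq\ker N_{12}$ via a rank-two perturbation mixing the offending direction $y_0\in\ker M_{22}$ with some $u\notin\ker M_{22}$, and finally $N|N_{22}\succeq0$ from the completion-of-squares identity \eqref{qmi:tenkai}. The first and last steps are essentially identical to the paper's.

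The only substantive difference is the execution of the kernel step. The paper stays inside $\Zqr(N)$ throughout: it sets $Z'=Z_0+x_1y_1^\top+x_2y_2^\top$ with $y_1=a\,N_{12}x_1$ and $y_2=-\tfrac{1}{p}\Omega x_2+b\,N_{12}x_1$ (where $p=x_2^\top N_{22}x_2<0$), obtains the closed form $\mathcal{N}(Z')=\mathcal{N}(Z_0)+(2a+pb^2)yy^\top-\tfrac{1}{p}(\Omega x_2)(\Omega x_2)^\top$, and observes that for every $b$ one can choose $a$ large enough to keep $Z'\in\Zqr(N)$, whereas the range of $b$ compatible with $Z'\in\Zqr(M)$ is bounded. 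Your route runs in the opposite order: first leave $\Zqr(M)$ along $u\,a_0^\top$, then use perturbations along $y_0$ (flat for $M$) to repair $\Phi_N$ back to positive semidefiniteness without re-entering $\Zqr(M)$. Both arguments exploit exactly the same structural fact---that $y_0$ is a flat direction for $M$ but a strictly growing direction for $\Phi_N$ along $a_0$---so the difference is organizational rather than conceptual; the paper's single two-parameter family is algebraically more compact, while your ``exit then repair'' sequence is more geometric and makes the role of the block decomposition with respect to $a_0$ explicit.
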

\begin{proof}
   We denote $\mathcal{M}(Z)$ and $\mathcal{N}(Z)$ as
   \begin{align*}
      \mathcal{M}(Z)&\coloneqq \IqZ^\top M\IqZ\\
      &=M|M_{22}+(Z+M_{22}^\dagger M_{21})^\top M_{22}(Z+M_{22}^\dagger M_{21}),\\
      \mathcal{N}(Z)&\coloneqq \IqZ^\top N\IqZ\\
      &=N_{11}+N_{12}Z+Z^\top N_{21}+Z^\top N_{22}Z,
   \end{align*}
   respectively, where we have used \eqref{qmi:tenkai}.
   The condition $\Zqr(N)\subseteq\Zqr(M)$ is equivalent to
   \begin{equation}\label{include}
      \mathcal{N}(Z)\succeq 0\Rightarrow \mathcal{M}(Z)\succeq 0.
   \end{equation}
    Due to nonemptiness of $\Zqr(N)$, there exists a matrix $Z_0\inX{R}{r\times q}$ such that $\mathcal{N}(Z_0)\succeq 0$.
   Then, $\mathcal{M}(Z_0)\succeq 0$ also holds.

   First, we show that
   \begin{equation}\label{MN}
      x^\top M_{22}x<0\Rightarrow x^\top N_{22}x<0
   \end{equation}
   for any $x\inX{R}{r}$.
   We assume that there exists a vector $x\inX{R}{r}$ such that $x^\top M_{22}x<0$ and $x^\top N_{22}x\geq 0$, 
   and derive a contradiction.
   We define $Z'=Z_0+xy^\top$ with an arbitrary $y\inX{R}{q}$.
   Then, we have that
   \begin{align*}
      \mathcal{M}(Z')&= \mathcal{M}(Z_0)+(x^\top M_{22}x)yy^\top+\Psi xy^\top+yx^\top \Psi^\top,\\
      \mathcal{N}(Z')&= \mathcal{N}(Z_0)+(x^\top N_{22}x)yy^\top+\Omega xy^\top+yx^\top \Omega^\top,
   \end{align*}
   where $\Psi$ and $\Omega$ are denoted as $\Psi=(Z_0+M_{22}^\dagger M_{21})^\top M_{22}$ and $\Omega=N_{12}+Z_0^\top N_{22}$ respectively.
   When $\Omega x=0$, we obtain $\mathcal{M}(Z')\succeq 0$ for all $y\inX{R}{q}$ because \eqref{include} holds and $\mathcal{N}(Z')\succeq \mathcal{N}(Z_0)\succeq 0$ for all $y\inX{R}{q}$.
   However, because $x^\top M_{22}x<0$, the range of $y$ satisfying $\mathcal{M}(Z')\succeq 0$ is bounded, which contradicts the arbitrariness of $y$.
   When $\Omega x\neq 0$, we set $y$ to $y=a\Omega x$ with an arbitrary $a\geq 0$.
   Then, we obtain $\mathcal{M}(Z')\succeq 0$ for all $a\geq 0$ because \eqref{include} holds and $\mathcal{N}(Z')\succeq \mathcal{N}(Z_0)\succeq 0$ for all $a\geq 0$.
   However, the range of $a$ satisfying $\mathcal{M}(Z'\succeq 0)$ is also bounded, which contradicts the arbitrariness of $a$.
   Thus, we obtain \eqref{MN}.

   Next, we show $N_{22}\preceq 0$.
   We prove that
   \begin{equation}\label{MN0}
      x^\top M_{22}x=0\Rightarrow x^\top N_{22}x\leq0,
   \end{equation}
   for all $x\inX{R}{r}$, 
   because $M_{22}\preceq 0$, \eqref{MN} and \eqref{MN0} imply $N_{22}\preceq 0$.
   Note that $x^\top M_{22}x=0$ if and only if $M_{22}x=0$ because of $M_{22}\preceq 0$.
   We assume that there exists $x_1\inX{R}{r}$ such that $M_{22}x_1=0$ and $x_1^\top N_{22}x_1> 0$, 
   and derive a contradiction.
   Since $M_{22}$ has a negative eigenvalue, there exists $x_2$ such that $x_2^\top M_{22}x_2<0$.
   Because $x_1^\top N_{22}x_1> 0$ is assumed and $x^\top N_{22} x$ is continuous in $x$,  
   there exists a scalar $\epsilon\in\mathbb{R}$ such that $(x_1+\epsilon x_2)^\top N_{22}(x_1+\epsilon x_2)>0$.
   However, we obtain $(x_1+\epsilon x_2)^\top N_{22}(x_1+\epsilon x_2)<0$ because \eqref{MN} holds and $(x_1+\epsilon x_2)^\top M_{22}(x_1+\epsilon x_2)=\epsilon^2x_2^\top M_{22}x_2<0$.
   This is a contradiction and we obtain \eqref{MN0}.
   Therefore, we obtain $N_{22}\preceq 0$. 

   Next, we show $\ker N_{22}\subseteq \ker N_{12}$.
   This condition is obvious when $N_{22}$ is non-singular. 
   Thus, we consider the case where $N_{22}$ is singular.
   We assume that there exists $x_1\inX{R}{r}$ such that $N_{22}x_1=0$ and $N_{12}x_1\neq 0$, 
   and derive a contradiction.
   Note that $M_{22}x_1=0$ due to $M_{22}\preceq 0$ and the contrapositive of \eqref{MN}.
   Since $M_{22}$ has a negative eigenvalue, there exists $x_2$ such that $x_2^\top M_{22}x_2<0$.
   We define $Z'$ as $Z'=Z_0+x_1y_1^\top+x_2y_2^\top$ with arbitrary $y_1,y_2\inX{R}{q}$.
   Then, we have that
   \begin{align*}
      \mathcal{M}(Z')=& \mathcal{M}(Z_0)+(x_1y_1^\top+x_2y_2^\top)^\top M_{22}(x_1y_1^\top+x_2y_2^\top)\\
      &+\Psi(x_1y_1^\top+x_2y_2^\top)+(x_1y_1^\top+x_2y_2^\top)^\top \Psi^\top\\
      =&\mathcal{M}(Z_0)+(x_2^\top M_{22}x_2)y_2y_2^\top+\Psi x_2y_2^\top+y_2x_2^\top \Psi^\top,\\
      \mathcal{N}(Z')=&\mathcal{N}(Z_0)+(y_1x_1^\top+y_2x_2^\top)N_{22}(x_1y_1^\top+x_2y_2^\top)\\
      &+\Omega(x_1y_1^\top+x_2y_2^\top)+(x_1y_1^\top+x_2y_2^\top)^\top \Omega^\top\\
      =&\mathcal{N}(Z_0)+N_{12}x_1y_1^\top+y_1x_1^\top N_{21}\\
      &+py_2y_2^\top+\Omega x_2y_2^\top+y_2x_2^\top \Omega^\top,
   \end{align*}
   where $p$ is $p=x_2^\top N_{22}x_2$. 
   Note that $p<0$ due to \eqref{MN} and $x_2^\top M_{22}x_2<0$.
   We set $y_1$ to $y_1=ay$ with $y=N_{12}x_1$ and an arbitrary $a\geq 0$, and $y_2$ to $y_2=-\frac{1}{p}\Omega x_2+b y$ with an arbitrary $b\in\mathbb{R}$.
   Then, we have that
   \begin{equation*}
      \mathcal{N}(Z')=\mathcal{N}(Z_0)+(2a+pb^2)yy^\top -\frac{1}{p}(\Omega x_2)(\Omega x_2)^\top.
   \end{equation*}
   Owing to arbitrariness of $a\geq 0$, for all $b$, there exists $a\geq 0$ such that $(2a+pb^2)\geq 0$,
   and then $\mathcal{N}(Z')\succeq \mathcal{N}(Z_0)\succeq 0$ holds.
   Due to \eqref{include}, for all $b$, there exists $a\geq 0$ such that $\mathcal{M}(Z')\succeq 0$.
   However, the range of $b$ satisfying $\mathcal{M}(Z')\succeq 0$ is bounded because $x_2^\top M_{22}x_2<0$, which contradicts the arbitrariness of $b$.
   Therefore, we obtain $\ker N_{22}\subseteq \ker N_{12}$.

   Finally, we show $N|N_{22}\succeq 0$.
   $\ker N_{22}\subseteq \ker N_{12}$ enables us to write 
   \begin{equation*}
      \mathcal{N}(Z)=N|N_{22}+(Z+N_{22}^\dagger N_{21})^\top N_{22}(Z+N_{22}^\dagger N_{21})
   \end{equation*}
   by utilizing \eqref{qmi:tenkai}.
   Since $N_{22}\preceq 0$ and $\mathcal{N}(Z_0)\succeq 0$, we obtain $N|N_{22}\succeq 0$.
\end{proof}

Lemma \ref{lem:Ninfo} states that if $\Zqr(M)$ is a matrix ellipsoid and contains $\Zqr(N)$, $\Zqr(N)$ is also a matrix ellipsoid.
Next, we prove Theorem~\ref{thm:Slem_new} by leveraging Lemma~\ref{lem:Ninfo}.

\begin{proof}
   If there exist scalars $\alpha\geq 0$ and $\beta>0$ such that \eqref{slem_lmi3},
   we obtain
   \begin{equation*}
      \IqZ^\top M\IqZ\succeq \alpha\IqZ^\top N\IqZ +\beta I_q,
   \end{equation*}
   which implies \eqref{slem_obj3}.

   Next, we prove the converse.
   We assume \eqref{slem_obj3} holds.
   Since $\Zqr(N)$ is nonempty, there exists $Z_0\in\Zqr(N)$.
   By utilizing \eqref{qmi:tenkai} and \eqref{slem_obj3}, we have that
   \begin{equation}\label{prf_Msucc0}
      M|M_{22}\succeq \matvec{I_q}{Z_0}^\top M\matvec{I_q}{Z_0}\succ 0.
   \end{equation}

   When $M_{22}=0$, $M_{12}=0$ is obtained by $\ker M_{22}\subseteq \ker M_{12}$, and then $M_{11}=M|M_{22}\succ 0$ also holds.
   Thus, $\alpha=0$ and $0<\beta<\lambda_{\mathrm{min}}$ satisfy \eqref{slem_lmi3} where $\lambda_{\mathrm{min}}$ is the minimum eigenvalue of $M_{11}$.

   When $M_{22}\neq 0$, $M_{22}$ has at least one negative eigenvalue.
   Since the preconditions and \eqref{prf_Msucc0} lead to $M\in\Pi_{q,r}$, we can apply Lemma~\ref{lem:Ninfo}, and then we have $N\in\Pi_{q,r}$.
   Thus, we obtain \eqref{slem_lmi3} by applying Proposition~\ref{prop:Slem_beta}.
\end{proof}

We obtain an LMI condition equivalent to the data informativity by applying Theorem~\ref{thm:Slem_new} for $M$ defined as \eqref{dist:dfn_M} and $N$ defined as \eqref{dfn_N}.
\begin{theorem}\label{thm:result}
   Let Assumption~\ref{asp:hPhi} hold.
   Assume that the data $(X_+,X_-,U_-)$ and the data perturbation $\Delta$ satisfy \eqref{stateeq_mat} and the data perturbation satisfies \eqref{lin_image} and \eqref{qmi_noise}.
   Then, the data $(X_+,X_-,U_-)$ is informative for quadratic stabilization
   if there exists an $n$-dimensional positive definite matrix $P\succ 0$, a matrix $L\inX{R}{m\times n}$, and a positive scalar $\beta>0$ such that
   \begin{equation}\label{LMI_result}
     \begin{bmatrix}
       P-\beta I_n&0&0&0\\
       0&-P&-L^\top &0\\
       0&-L&0&L\\
       0&0&L^\top &P
     \end{bmatrix}-\sqmat{N}{0}{0}{0}\succeq 0,
   \end{equation}
   where $N$ is defined as \eqref{dfn_N}.
   In particular, when $\hPhi_{22}\prec 0$ or $\im E_0\subseteq\im E$, \eqref{LMI_result} is a necessary and sufficient condition for data informativity for quadratic stabilization.
   Moreover, if \eqref{LMI_result} is feasible, a controller $K=LP^{-1}\inX{R}{m\times n}$ stabilizes all systems in $\Sigma$.
\end{theorem}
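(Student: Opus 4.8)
The plan is to reduce data informativity to the QMI implication governed by Theorem~\ref{thm:Slem_new}, and then convert the resulting inequality \eqref{slem_lmi3} into \eqref{LMI_result} via a Schur complement together with the change of variables $L=KP$. First I would record the identity
\[
   \IAB^\top M\IAB = P-(A+BK)P(A+BK)^\top,
\]
with $M$ as in \eqref{dist:dfn_M}, so that the Lyapunov inequality \eqref{dist:qstab} for a pair $(A,B)$ is exactly the strict QMI $\IAB^\top M\IAB\succ 0$. Hence, for fixed $P\succ 0$ and $K$, quadratic stabilization of every system in $\bar\Sigma$ (defined in \eqref{dfn_barsigma}) is equivalent to the implication \eqref{slem_obj3} with $N$ from \eqref{dfn_N}. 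Since $(A_s,B_s)\in\Sigma\subseteq\bar\Sigma$ by Theorem~\ref{thm:sys_set}, the set $\Zab{n}{n+m}(N)$ is nonempty; moreover $M_{12}=0$ makes $\ker M_{22}\subseteq\ker M_{12}$ trivial, and $M_{22}=-\matvec{I_n}{K}P\matvec{I_n}{K}^\top\preceq 0$ whenever $P\succ 0$. Thus the preconditions of Theorem~\ref{thm:Slem_new} are met, and \eqref{slem_obj3} is equivalent to the existence of $\alpha\geq 0$, $\beta>0$ satisfying \eqref{slem_lmi3}.

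For sufficiency, which I expect to hold without any extra hypothesis, I would start from feasibility of \eqref{LMI_result}. Because its $(4,4)$ block equals $P\succ 0$, taking the Schur complement with respect to this block reintroduces the term $-LP^{-1}L^\top$ and recovers \eqref{slem_lmi3} with $\alpha=1$ and $M$ built from $K=LP^{-1}$. Theorem~\ref{thm:Slem_new} then yields \eqref{slem_obj3}, i.e.\ $K=LP^{-1}$ quadratically stabilizes every system in $\bar\Sigma$. As $\Sigma\subseteq\bar\Sigma$ always, the data is informative for quadratic stabilization and the same $K$ stabilizes all systems in $\Sigma$.

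For necessity under $\hPhi_{22}\prec 0$ or $\im E_0\subseteq\im E$, Theorem~\ref{thm:sys_set} gives $\Sigma=\bar\Sigma$, so informativity furnishes $P\succ 0$ and $K$ for which \eqref{slem_obj3} holds on $\bar\Sigma$, and Theorem~\ref{thm:Slem_new} supplies $\alpha\geq 0$, $\beta>0$ satisfying \eqref{slem_lmi3}. The key remaining point is $\alpha>0$: if $\alpha=0$, then \eqref{slem_lmi3} forces $M_{22}\succeq 0$, but the $(1,1)$ block of $M_{22}$ is $-P\prec 0$, a contradiction. With $\alpha>0$, I would linearize the bilinear term $-\matvec{I_n}{K}P\matvec{I_n}{K}^\top$ by a Schur complement, introducing the auxiliary $(4,4)$ block $P$ and turning \eqref{slem_lmi3} into the block form of \eqref{LMI_result} but with $\alpha N$ in place of $N$; dividing by $\alpha$ and setting $(\tilde P,\tilde L,\tilde\beta)=(P/\alpha,L/\alpha,\beta/\alpha)$ then produces \eqref{LMI_result} exactly, while leaving the controller $K=\tilde L\tilde P^{-1}=LP^{-1}$ unchanged.

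The main obstacle is the bookkeeping around the scalar $\alpha$ and the Schur-complement/change-of-variables bridge between \eqref{slem_lmi3} and \eqref{LMI_result}: one must check that $L=KP$ linearizes $-\matvec{I_n}{K}P\matvec{I_n}{K}^\top$, that the augmented $(4,4)$ block is precisely the $P$ reintroduced by the Schur complement, and that rescaling by $1/\alpha>0$ maps feasible tuples bijectively. The conceptual content, namely removing the matrix-ellipsoid precondition on the admissible system set, is carried entirely by Theorem~\ref{thm:Slem_new}; the remainder is the same (care-demanding but routine) LMI manipulation already used for Proposition~\ref{prop:dist_result} in the system-noise case.
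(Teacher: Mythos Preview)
Your proposal is correct and follows essentially the same approach as the paper: invoke Theorem~\ref{thm:sys_set} for $\Sigma\subseteq\bar\Sigma$ (with equality under the extra hypotheses), verify the preconditions of Theorem~\ref{thm:Slem_new} for $M$ in \eqref{dist:dfn_M} and $N$ in \eqref{dfn_N}, and then pass between \eqref{slem_lmi3} and \eqref{LMI_result} via the Schur complement and $L=KP$. Your explicit argument that $\alpha>0$ (since $\alpha=0$ would force $M_{22}\succeq 0$ while its upper-left block is $-P\prec 0$) is a welcome clarification that the paper leaves to the reference for Proposition~\ref{prop:dist_result}.
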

\begin{proof}
   We have that $\Sigma\subseteq \bar{\Sigma}$ by Theorem~\ref{thm:sys_set},
   where $\bar{\Sigma}$ is characterized by the QMI with $N$.
   $\bar{\Sigma}$ is not empty because it contains the true system $(A_s,B_s)$.
   Thus, $\Zab{n}{n+m}(N)$ is not empty.
   On the other hand, the matrix $M$ defined as \eqref{dist:dfn_M} satisfy $M_{22}\preceq 0$ and $\ker M_{22}\subseteq\ker M_{12}$.
   Thus, we can apply Theorem~\ref{thm:Slem_new} for $M$ and $N$ and obtain a following statement:
   All systems in $\bar{\Sigma}$ satisfy the common Lyapunov inequality \eqref{dist:qstab} with a common controller if and only if 
   there exists a controller $K$, a positive definite matrix $P$, and scalars $\alpha\geq 0$ and $\beta>0$ such that
   \begin{equation}\label{prf_lmi}
      M-\alpha N-\sqmat{\beta I_n}{0}{0}{0}\succeq 0.
   \end{equation}
   Since $\Sigma\subseteq \bar{\Sigma}$, \eqref{prf_lmi} is sufficient condition for the data informativity for quadratic stabilization.
   We transform \eqref{prf_lmi} into an LMI condition based on the change of variable $L=KP$ and Schur complement.
   By scaling with respect to $\alpha$, we obtain the LMI condition \eqref{LMI_result} (for more details, see the proof of \cite[Theorem 5.1(a)]{DDCQMI:Waarde2023_siam_qmi})

   In particular, when $\hPhi_{22}\prec 0$ or $\im E_0\subseteq\im E$,
   we obtain $\Sigma=\bar{\Sigma}$ from Theorem~\ref{thm:sys_set}, 
   and then, the feasibility of the LMI condition \eqref{LMI_result} is equivalent to the data informativity for quadratic stabilization.
\end{proof}

In the case of the system noise or EIV, 
Theorem~\ref{thm:result} provides a necessary and sufficient condition for the data informativity for quadratic stabilization,
because $\im E_0\subseteq\im E$.
The LMI condition \eqref{LMI_result} coincides with that in Proposition~\ref{prop:dist_result} where $E=[I_n\ 0]^\top\inX{R}{(2n+m)\times n}$
and is relaxed condition of the result in EIV setting~\cite[Theorem 1]{DDCQMI:BISOFFI2024_CSL} where $E=I_{2n+m}$.


\section{CONCLUSION AND FUTURE WORK}\label{sec:conclusion}
In this study, we have introduced the notion of data perturbation as a general noise model including system noise and EIV,
and derived an LMI condition equivalent to the data informativity for quadratic stabilization under the data perturbation constrained by a QMI.
We have derived the QMI characterization of the admissible system set with the geometric interpretation,
and obtained the LMI condition by applying a newly developed matrix S-lemma.
Our analysis provides a unified result of the existing ones under the system noise and EIV without any restrictive assumptions.
Future work includes assessing the data informativity for other analysis and control problems under data perturbation.


\section*{Appendix}
We introduce lemmas used in the proof of Theorem~\ref{thm:sys_set}.
\begin{lemma}\label{lem:param}(\cite[Lemma A.1]{DDCQMI:Waarde2023_siam_qmi})
  For $A\inX{R}{q\times p}$ and $B\inX{R}{r\times p}$,
  $A^\top A\succeq B^\top B$ holds if and only if there exists a matrix $M\inX{R}{r\times q}$ such that
  $B=MA,\ M^\top M\preceq I_q.$
\end{lemma}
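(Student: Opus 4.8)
The plan is to prove the two implications separately: the ``if'' direction is a one-line congruence argument, while the ``only if'' direction requires an explicit construction of $M$ via the Moore--Penrose pseudoinverse $A^\dagger$.

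For sufficiency, suppose $B=MA$ with $M^\top M\preceq I_q$. Then $B^\top B=A^\top M^\top M A\preceq A^\top A$, where the inequality is obtained by congruence of $M^\top M\preceq I_q$ with $A$. This settles one direction immediately.

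For necessity, I would assume $A^\top A\succeq B^\top B$ and take the candidate $M\coloneqq BA^\dagger\inX{R}{r\times q}$, then verify the two required properties in turn. To obtain $B=MA=BA^\dagger A$, I first establish the kernel inclusion $\ker A\subseteq\ker B$: for any $x$ with $Ax=0$ one has $0=x^\top A^\top A x\geq x^\top B^\top B x=\norm{Bx}^2\geq 0$, forcing $Bx=0$. Since $A^\dagger A$ is the orthogonal projector onto the row space of $A$, the inclusion $\ker A\subseteq\ker B$ gives $B(I-A^\dagger A)=0$, i.e. $BA^\dagger A=B$, so $B=MA$. To bound $M^\top M$, I compute $M^\top M=(A^\dagger)^\top B^\top B A^\dagger$ and apply congruence of $B^\top B\preceq A^\top A$ with $A^\dagger$, obtaining $M^\top M\preceq (A^\dagger)^\top A^\top A A^\dagger=(AA^\dagger)^\top(AA^\dagger)$. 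Because $AA^\dagger$ is symmetric and idempotent (the orthogonal projector onto the range of $A$), this last matrix equals $AA^\dagger$, which satisfies $AA^\dagger\preceq I_q$; hence $M^\top M\preceq I_q$ as required.

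I expect the main obstacle to be the necessity direction, and within it precisely the step $BA^\dagger A=B$. It is not automatic that the rows of $B$ lie in the row space of $A$, and this is exactly where the semidefinite inequality does the essential work: $A^\top A\succeq B^\top B$ forces $\ker A\subseteq\ker B$, which is equivalent to that row-space containment and hence to $BA^\dagger A=B$. Once this reproduction of $B$ is secured, the norm bound $M^\top M\preceq I_q$ follows cleanly from congruence together with the projector identity $(AA^\dagger)^\top(AA^\dagger)=AA^\dagger$, so the remaining computations are routine.
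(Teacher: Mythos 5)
Your proof is correct. The paper itself does not prove this lemma---it is imported by citation from \cite[Lemma A.1]{DDCQMI:Waarde2023_siam_qmi}---so there is no internal proof to compare against; your argument (sufficiency by congruence with $A$, necessity via the candidate $M=BA^\dagger$, where $A^\top A\succeq B^\top B$ forces $\ker A\subseteq\ker B$ and hence $BA^\dagger A=B$, and the projector identities $(AA^\dagger)^\top(AA^\dagger)=AA^\dagger\preceq I_q$ give $M^\top M\preceq I_q$) is the standard pseudoinverse construction, i.e.\ the finite-dimensional Douglas-lemma argument, and is exactly the kind of proof the cited reference gives.
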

\begin{lemma}\label{lem:prjmatrix}
  For $A\inX{R}{m\times n}$, $A^\dagger A\preceq I_n$ holds.
\end{lemma}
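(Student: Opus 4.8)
The plan is to recognize $P \coloneqq A^\dagger A$ as a symmetric idempotent matrix---i.e., an orthogonal projector---and to exploit that the spectrum of any such matrix is contained in $\{0,1\}$, which forces $0 \preceq P \preceq I_n$.

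First I would recall the defining Penrose identities of the Moore--Penrose pseudoinverse, in particular $A A^\dagger A = A$ together with the symmetry condition $(A^\dagger A)^\top = A^\dagger A$. The symmetry condition immediately yields that $P = A^\dagger A$ is symmetric. Next I would verify idempotency directly from $A A^\dagger A = A$:
\[
P^2 = A^\dagger A A^\dagger A = A^\dagger (A A^\dagger A) = A^\dagger A = P.
\]
Thus $P$ is a real symmetric idempotent matrix, established purely from the pseudoinverse axioms without assuming a priori that $P$ is a projector.

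With symmetry and idempotency in hand, I would finish by an eigenvalue argument: for any eigenpair $(\lambda, v)$ with $Pv = \lambda v$ and $v \neq 0$, idempotency gives $\lambda v = Pv = P^2 v = \lambda^2 v$, so $\lambda^2 = \lambda$ and hence $\lambda \in \{0,1\}$. Since $P$ is symmetric it admits an orthonormal eigenbasis with real eigenvalues, all lying in $\{0,1\}$, so $0 \preceq P \preceq I_n$, which is exactly $A^\dagger A \preceq I_n$. An equivalent eigenvalue-free route is to observe that, using symmetry and idempotency, $x^\top(I_n - P)x = \norm{x}^2 - x^\top P^\top P x = \norm{x}^2 - \norm{Px}^2 = \norm{(I_n-P)x}^2 \geq 0$ for every $x \inX{R}{n}$.

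There is essentially no obstacle here, as the statement is a standard property of the pseudoinverse; the only point requiring care is to justify symmetry and idempotency of $A^\dagger A$ from the Penrose conditions before invoking the spectral consequence. Since Lemma~\ref{lem:prjmatrix} is used only to guarantee $(SQ)^\dagger SQ \preceq I_p$ in the proof of Theorem~\ref{thm:sys_set}, this short argument suffices for the application.
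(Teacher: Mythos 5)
Your proof is correct and takes essentially the same route as the paper: the paper simply cites \cite[Prop.~6.1.6]{bernstein2009matrix} for the fact that the eigenvalues of $A^\dagger A$ lie in $\{0,1\}$, while you derive that fact (symmetry and idempotency from the Penrose conditions) yourself before drawing the same conclusion. Your additional quadratic-form argument is a nice self-contained alternative, but the core idea is identical.
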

\begin{proof}
  This lemma follows the fact that the eigenvalues of $A^\dagger A$ are contained by a set $\{0,\ 1\}$ \cite[Prop. 6.1.6]{bernstein2009matrix}.
\end{proof}

\bibliographystyle{IEEEtran}
\bibliography{IEEEabrv,Robust_DDC,DDC,mathematics}
\end{document}